\theoremstyle{plain}
\newtheorem{theorem}{Theorem}[section]
\newtheorem{cor}[theorem]{Corollary}
\newtheorem{prop}[theorem]{Proposition}
\newtheorem{lemma}[theorem]{Lemma}
\newcounter{proofcount}
\newtheorem*{question}{Question}
\newtheorem*{claim*}{Claim}
\newenvironment{claimproof*}[1][Proof of Claim.] 
{%
	\proof[#1]%
	
}
{%
	\endproof%
}
\newtheorem*{teo}{Main Theorem}
\theoremstyle{definition}
\newtheorem{remark}[theorem]{Remark}
\newtheorem{fact}[theorem]{Fact}
\newtheorem{definition}[theorem]{Definition}
\newtheorem{example}[theorem]{Example}
\newtheorem{hyp}{Assumption}
\newcommand{\nc}{\newcommand}
\nc{\Z}{\mathbb{Z}}
\nc{\Q}{\mathbb{Q}}
\nc{\N}{\mathbb{N}}
\nc{\F}{\mathbb{F}}
\nc{\UU}{\mathbb{U}}
\nc{\C}{\mathbb{C}}
\nc{\M}{\mathcal{M}}
\nc\LL{\mathcal L}
\nc\II{\mathcal I}
\nc{\Ring}{\mathcal R}
\nc\BB{\mathcal B_M^Y}
\nc{\RB}{\mathrm{R}(\BB)}
\nc{\stt}{\operatorname{St}}
\nc{\stab}{\operatorname{Stab}}
\nc{\GO}[1]{G_{#1}^{00}}
\nc{\sbgp}[1]{\langle\xspace {#1}\xspace\rangle}
\nc{\Conn}[1]{\langle\xspace {X}\xspace\rangle^{00}_{#1}}
\nc{\band}[1]{\bar d_{\mathcal{#1}}}
\nc{\Cos}[1]{\operatorname{Cos}(#1)}
\nc\Def{\operatorname{Def}}
\nc{\dcl}{\operatorname{dcl}}
\nc{\acl}{\operatorname{acl}}
\nc{\nf}[1]{_{\mid {#1}}}
\nc{\restr}[1]{\xspace_{\upharpoonright {#1}}}
\nc\inv{ ^{-1}}
\nc\Coh{\operatorname{Coh}}
\nc{\supp}{\operatorname{\mathrm{supp}}}
\nc{\tp}{\operatorname{tp}}
\nc\cb{\operatorname{Cb}}
\nc\U{\operatorname{U}}
\nc{\cf}{\text{cf.\,}}
\nc{\eg}{\text{e.g. }}
\def\Ind#1#2{#1\setbox0=\hbox{$#1x$}\kern\wd0\hbox to
  0pt{\hss$#1\mid$\hss} \lower.9\ht0\hbox to
  0pt{\hss$#1\smile$\hss}\kern\wd0}
\def\Notind#1#2{#1\setbox0=\hbox{$#1x$}\kern\wd0\hbox to
  0pt{\mathchardef\nn="0236\hss$#1\nn$\kern1.4\wd0\hss}\hbox to
  0pt{\hss$#1\mid$\hss}\lower.9\ht0 \hbox to
  0pt{\hss$#1\smile$\hss}\kern\wd0}
\def\indip{\mathop{\ \ \hbox to 0pt{\hss$\mid^{\hbox to
0pt{$\scriptstyle P$\hss}}$\hss}
\lower4pt\hbox to 0pt{\hss$\smile$\hss}\ \ }}
\def\nindip{\mathop{\ \ \hbox to 0pt{\hss$\!\not{\mid}^{\hbox to
0pt{$\scriptstyle\, P$\hss}}$\hss}
\lower4pt\hbox to 0pt{\hss$\smile$\hss}\ \ }}
\begin{document}

\title[Complete Amalgamation]{Complete type amalgamation for non-standard finite groups}
\date{\today}

\author{Amador Martin-Pizarro and Daniel Palac\'in}
\address{Abteilung f\"ur Mathematische Logik, Mathematisches Institut,
  Albert-Ludwigs-Universit\"at Freiburg, Ernst-Zermelo-Stra\ss e 1, 
  D-79104
  Freiburg, Germany}
\address{Departamento de \'Algebra, Facultad de Matem\'aticas,
	Universidad Complutense de Madrid, 28040 Madrid, Spain}

\email{pizarro@math.uni-freiburg.de}
\email{dpalacin@ucm.es}

\thanks{Research supported by MTM2017-86777-P as well as by the Deutsche
	Forschungsgemeinschaft (DFG, German Research Foundation) - 
	Project number 2100310201 and 2100310301, part of the ANR-DFG 
program GeoMod}
\keywords{Model Theory, Additive Combinatorics, Arithmetic Progressions, 
Quasirandom Groups}
\subjclass[2010]{03C45, 11B30}

\begin{abstract}
We extend previous work on Hrushovski's stabilizer's theorem and 
prove a measure-theoretic version of a well-known result of 
Pillay-Scanlon-Wagner on products of  three types. This generalizes 
results of Gowers  on products of three sets and yields 
model-theoretic proofs of existing asymptotic results for 
quasirandom 
groups. We also obtain a model-theoretic proof of Roth's theorem on the existence of arithmetic 
progressions of length $3$ for subsets of positive density in suitable definably amenable groups, such as countable amenable abelian groups without involutions and ultraproducts of finite abelian groups of odd order.
 \end{abstract}

\maketitle

\section*{Introduction}

Szemerédi answered positively a question of  Erd\H{o}s and Tur\'an 
by showing \cite{eS75} that every subset $A$ of $\N$ of  upper 
density 
\[ \limsup\limits_{n\to \infty} \frac{\left|A\cap 
\{1, \ldots, n\}\right|}{n}>0\] must 
contain an arithmetic progression of length $k$ for every natural 
number $k$. For $k=3$, the existence of arithmetic progressions of 
length $3$ (in short $3$-AP)  was already proven by Roth in what is 
now called Roth's theorem on arithmetic progressions \cite{kR53} 
(not to be confused with Roth's theorem on  diophantine 
approxi\-mation of algebraic integers). There has been 
(and still is) impressive work done on understanding Roth's and 
Szemerédi's theorems, explicitly computing lower bounds for the 
density as well as extending these results to more general 
settings.  In the second direction, it is worth mentioning Green and 
Tao's result on the existence of arbitrarily long finite arithmetic 
progressions among the subset of prime numbers \cite{GT08}, 
which however has upper density $0$. 

In the non-commutative setting, proving single instances of 
Szemerédi's 
theorem, particularly Roth's theorem, becomes highly 
non-trivial. Note that the sequence 
$(a,ab, ab^2)$ can be seen as a $3$-AP, even for  non-commutative 
groups.  Gowers asked 
\cite[Question 6.5]{wG08} whether the proportion of pairs $(a,b)$ 
in $\mathrm{PSL}_2(q)$, for $q$ a prime power, 
such that $a$, $ab$ and $ab^2$ all lie in a fixed subset $A$ of density 
$\delta$ approximately equals $\delta^3$. 
Gower's question was positively answered 
by Tao \cite{tT13} and later extended to arbitrary 
non-abelian finite simple groups by Peluse \cite{sP18}. For 
 arithmetic 
progressions $(a, ab, ab^2, ab^3)$ of length $4$ in 
$\mathrm{PSL}_2(q)$, a partial result was  obtained in 
\cite{tT13}, whenever the element $b$ is diagonalizable 
over the finite field $\mathbb F_q$ (which happens half of the time). 

A different  generalization of Roth's theorem, 
present in work of Sanders \cite{tS09} and Henriot  \cite{kH16},    
concerns the existence of  a $3$-AP in finite sets of small doubling in abelian groups. 
Recall that a finite set $A$ of a group has doubling at most 
$K$  if the productset $A\cdot A=\{ab\}_{a, b\in A}$ has 
cardinality 
$|A\cdot A|\le K|A|$. More generally, a finite set has tripling at 
most  $K$ if $|A\cdot A\cdot A|\le K|A|$.  If $A$ has tripling at 
most $K$, the comparable set $A\cup A\inv \cup\{\mathrm{id}_G\}$ (of size at 
most $2|A|+1$) has tripling at most 
$(CK^C )^2 $ with respect to some explicit absolute
constant $C>0$, so we may assume that $A$ is symmetric 
and contains the neutral element. Archetypal sets of small doubling 
are approximate subgroups, that is, symmetric sets $A$ such that 
$A\cdot A$ is covered by finitely many translates of $A$. 
The model-theoretic study of  approximate 
subgroups  first appeared in 
Hrushovski's striking paper \cite{eH12}, which contained the 
so-called  stabilizer theorem, adapting techniques from stability 
theory to an abstract measure-theoretic setting. Hrushovski's work 
has led to several remarkable applications  of model theory 
to additive combinatorics. 

In classical geometric model theory, and more generally, in a group 
$G$ 
definable in a simple theory, Hrushovski's stabilizer of a generic 
type over an elementary substructure $M$ is the connected 
component $\GO M$, that is, the 
smallest type-definable subgroup over $M$ of bounded index 
(bounded with respect to the saturation of the ambient universal 
model). Generic types in $\GO M$ are called $\emph{principal 
types}$. If the theory is stable, there is a unique principal type, but 
this need not be the case for simple theories. However, 
Pillay, Scanlon and Wagner noticed in \cite[Proposition 
2.2]{PSW98} that for every three principal types $p$, $q$ and $r$ in 
a simple theory over an elementary substructure $M$, there are 
independent realizations $a$ of $p$ and $b$ of $q$ over $M$ such 
that $a\cdot b$ realizes $r$. The main ingredient in their proof  is a clever 
application of  $3$-complete amalgamation (also known as 
\emph{the 
independence theorem}) over the elementary  substructure $M$. For 
the purpose of the present work, we shall not define what a general 
complete amalgamation problem is, but a variation of it, restricting the 
problem to  conditions 
given by products with respect to the underlying group  law: 
\begin{question}
Fix a natural number $n\geq 2$.  For each non-empty subset $F$ of 
$\{1,\ldots, 
n\}$, let $p_F$ be a principal generic (that is, weakly random) type  over the  elementary substructure $M$. Can we find (under suitable conditions) an 
independent (weakly random) tuple $(a_1,\ldots, a_n)$ of 
$G^n$ such that 
for all $\emptyset\neq F\subseteq\{1,\ldots, n\}$, the element 
$a_F$ realizes $p_F$, where $a_F$ stands for the product of all 
$a_i$, with $i$ in $F$, written with the indices in increasing order? 
\end{question}
The above formulation resonates with \cite[Theorem 
5.3]{GT08} for quasirandom groups and agrees for $n=2$  with the 
aforementioned result of Pillay, Scanlon and Wagner. 

In this work, we will give a (partial)
positive solution for $n=2$ (Theorem \ref{T:pqr}) to the above 
question for definable groups equipped with a definable Keisler measure satisfying Fubini (e.g. ultraproducts of groups equipped with the 
associated counting measure localized with respect to a  
distinguished finite set, as in Example \ref{E:amenable}).  As a 
by-product, we obtain a measure-theoretic version 
of the result of Pillay, Scanlon and Wagner (Theorem \ref{T:pqr}):
\begin{teo}\label{T:Teo_pqr}
Given a pseudo-finite subset $X$ of small tripling in a sufficiently saturated group $G$ and a countable elementary substructure $M$, for every weakly random type $q$ and almost all pairs $(p,r)$ of weakly random types over $M$  concentrated in the subgroup $\sbgp X$ generated by $X$, there is a weakly random pair $(a, b)$ over $M$ in $p\times q$ with $a\cdot b$ realizing $r$, whenever $\Cos{p}\cdot \Cos{q}=\Cos{r}$, where $\Cos{p}$ is the coset of $\Conn M$ determined by the type $p$. 
\end{teo}
The result of Pillay, Scanlon and Wagner holds for all such pairs 
$(p,r)$ of generic types. Unfortunately, our techniques can only 
prove the analogous result outside a set of measure $0$. Whilst we 
do not know how to obtain the result for all pairs $(p,r)$ of weakly 
random types over $M$, 
our results however suffice to reprove model-theoretically some 
known 
results. Using a model-theoretic analog of Croot-Sisask's almost periodicity \cite[Corollary 1.2]{CS10} (Corollary \ref{C:measure_constant_coset}), we easily deduce a non-quantitative version of Roth’s theorem (Theorem \ref{T:Roth_defamen}) 
on 3-AP for finite subsets of small doubling in abelian groups with trivial 2-torsion, which resembles previous work of Sanders \cite[Theorem 7.1]{tS09} and
generalizes a result of Frankl, Graham and Rödl \cite[Theorem 1]{FGR87}.

In Section \ref{S:ultra}, we reprove model-theoretically results valid for ultra-quasirandom  groups, that is, asymptotic limits of quasirandom groups, already studied by Bergelson and Tao 
\cite{BT14}, and later by the second  author \cite{dP20}. In 
particular, in Corollary \ref{C:gowers_mixing} we give 
non-quantitative model-theoretic proofs of Gower's results 
\cite[Theorem 	3.3\ \& Theorem 5.3]{wG08}. In Section 
\ref{S:localultra}, we explore further this analogy to extend some of 
the results of Gowers to a local setting, without imposing that the 
group is an ultraproduct of quasirandom groups (see Corollaries 
\ref{C:ppaln=2} and \ref{C:ppaln_ge3}).

We will assume throughout the text a certain familiarity with basic notions in model theory. Sections $\ref{S:Fubini}, \ref{S:stable}$ and $\ref{S:pqr}$ contain the model-theoretic core of the paper, whilst Sections \ref{S:ultra} and \ref{S:localultra} contain applications to additive combinatorics.  

\subsection*{Acknowledgements} We are most indebted to  Ehud Hrushovski and Julia Wolf for their helpful remarks which have considerably improved this article.  We would like to express our gratitude to Angus Matthews for pointing out a mistake in a previous version of Theorem \ref{T:pqr}. 

We would also like to express our sincere gratitude to the anonymous referee of the previous versions of this article for the insightful comments and remarks, as well as for providing a clever way to circumvent some additional assumptions in a previous proof of Roth's Theorem (Theorem \ref{T:Roth_defamen}). The statement and the proof of Lemma \ref{L:ref} are solely due to the referee. We are most thankful that we are allowed to include the proof here.  

\section{Randomness and Fubini}\label{S:Fubini}

Most of the material in this section can be found in \cite{Halmos, eH12, 
MW15, pSbook}.

We  work inside a sufficiently saturated model $\mathbb U$ of a complete 
first-order theory (with infinite models) in a countable language $\LL$, that is, 
the model $\UU$ is saturated and strongly homogeneous with 
respect to some  sufficiently large  cardinal $\kappa$. 
All sets  and tuples are taken inside $\mathbb U$. 

A subset  $X$ of $\UU^n$ is definable over the parameter set $A$ 
if there exists a formula $\phi(x_1,\ldots,x_n, y_1, \ldots, y_m )$ 
and a tuple $a=(a_1,\ldots, a_m)$ in $A$ such that an $n$-tuple $b$ 
belongs to $X$ if and only if  $\phi(b, a)$ holds in $\UU$. As usual, 
we identify a definable subset of $\UU$ with a formula defining it. 
Unless explicitly stated, when we use the word 
definable, we mean 
definably possibly with parameters. 
It follows that a subset $X$ is definable over the parameter set $A$ 
if and only if $X$ is definable (over some set of 
parameters)  and invariant under 
the action of the group of automorphisms $\mathrm{Aut}(\UU/A)$ 
of $\UU$ fixing $A$ pointwise.  The subset $X$ of $\UU$ is 
type-definable if it is the intersection of a bounded number of 
definable sets, where bounded means that its 
size is strictly smaller than the degree of  saturation of  $\UU$. 

For our applications we will mainly consider the case where the 
language $\LL$ contains the language of groups and the universe of 
our ambient model is a group. Nonetheless, our model-theoretic 
setting works as well for an arbitrary definable group, that is, a group 
whose underlying set and its group law are both definable.  

\begin{definition}\label{D:amenable}
A \emph{definably amenable pair} $(G, X)$ consists of an underlying definable 
group $G$ together with the following data:

\begin{itemize}
    \item A definable subset $X$ of $G$; 
    \item The (boolean) ring $\Ring$ of definable sets contained in the subgroup $\sbgp  X$ generated by $X$, that is,  the subcollection $\Ring$ is closed under finite unions and relative set-theoretic differences;
    \item A finitely additive measure $\mu$ on $\Ring$ invariant 
    under both left and right translation with $\mu(X)=1$. (Note that 
    we require translation invariance under both actions). 
\end{itemize}
\end{definition}

Note that the subgroup $\sbgp X$ generated by the subset $X$ need not be definable, but it is 
\emph{locally 
definable}, for the subgroup $\sbgp X$ is a countable union of 
definable sets of the form 
\[ X^{\odot n}= \underbrace{X_1\cdots X_1}_{n},\]where $X_1$ is the definable 
set $X\cup 
X\inv\cup \{\mathrm{id}_G\}$. 
Furthermore, every definable subset $Y$ of $\sbgp X$ is 
contained in some finite product $X^{\odot
n}$, by compactness and saturation of the ambient model. 

\begin{remark}\label{R:Cara}
Model-theoretic compactness implies that the finitely additive 
measure $\mu$ satisfies  Carath\'eodory's criterion, so there exists a 
unique $\sigma$-additive measure on the $\sigma$-algebra 
generated by $\Ring$. On the other hand, for every definable set $Y$ 
of $\Ring$ over any set of parameters $C$, the measure $\mu$ 
extends to a regular Borel finite measure on the Stone space 
$S_Y(C)$ of complete types over $C$ containing the $C$-definable 
set $Y$, see \cite[p. 99]{pSbook}. 
\end{remark}

We will denote the above extension of $\mu$  again by $\mu$, though there will be (most likely) Borel sets of  infinite measure, 
as noticed by Massicot and Wagner: 

\begin{fact}\textup{(}\cite[Remark 4]{MW15}\textup{)} 
The subgroup $\sbgp X$ is definable if and only if $\mu(\sbgp X)$ 
is finite. 	
\end{fact}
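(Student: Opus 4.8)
The plan is to establish both implications; the one with real content is that finiteness of $\mu(\sbgp X)$ forces $\sbgp X$ to be definable, and I would handle it via a translation-invariance pigeonhole that collapses the defining chain $X^{\odot 1}\subseteq X^{\odot 2}\subseteq\cdots$ of $\sbgp X$.

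For the direction ``$\sbgp X$ definable $\Rightarrow\mu(\sbgp X)<\infty$'', I would first note that, being a definable set equal to the increasing union $\bigcup_n X^{\odot n}$ of definable sets, $\sbgp X$ must already coincide with some $X^{\odot N}$: otherwise the partial type $\{x\in\sbgp X\}\cup\{x\notin X^{\odot n}\}_{n\in\N}$ would be finitely satisfiable and hence, by saturation of $\UU$, realised --- a contradiction. Thus $\sbgp X$ is a definable subset of itself, and in the setting of Example \ref{E:amenable} every definable subset of $\sbgp X$ has finite measure (trivially $\mu(G)=1$ in case (a); in case (b) the Pl\"unnecke--Ruzsa bound $|X_n^{\odot m}|\le K^{O_m(1)}|X_n|$ passes to the ultraproduct and bounds $\mu(X^{\odot m})$), so $\mu(\sbgp X)<\infty$.

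For the converse, assume $\mu(\sbgp X)<\infty$. After the Carath\'eodory extension the measure $\mu=\mu_1$ is $\sigma$-additive, hence continuous from below, so $\mu(X^{\odot n})\to\mu(\sbgp X)$ along the chain above; since this limit is finite and $\mu(X)=1>0$, I can fix $N$ with $\mu(\sbgp X\setminus X^{\odot N})=\mu(\sbgp X)-\mu(X^{\odot N})<\mu(X)$. The claim is then that $\sbgp X=X^{\odot(N+1)}$, which is definable and completes the proof. Given any $g\in\sbgp X$, the translate $gX_1$ lies inside $\sbgp X$ and, by invariance of $\mu$ under translation, has measure $\mu(X_1)\ge\mu(X)>\mu(\sbgp X\setminus X^{\odot N})$; hence $gX_1$ cannot be contained in $\sbgp X\setminus X^{\odot N}$, so there is $x\in X_1$ with $gx\in X^{\odot N}$, and since $X_1$ is symmetric, $g=(gx)\,x\inv\in X^{\odot N}\cdot X_1=X^{\odot(N+1)}$. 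The reverse inclusion is immediate.

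The step I expect to be the crux is exactly this final pigeonhole: extracting from an arbitrary $g$ an element $x\in X_1$ with $gx\in X^{\odot N}$. It is here that one genuinely uses the invariance of $\mu$ under translation (to transport measure under $g\cdot(-)$), the normalisation $\mu(X)=1$, and the symmetry of $X_1$. The remaining care is purely bookkeeping --- keeping track that every set whose measure is subtracted is finite so that $\mu(\sbgp X)-\mu(X^{\odot N})$ makes sense and tends to $0$, and invoking $\sigma$-additivity (rather than mere finite additivity) for continuity from below.
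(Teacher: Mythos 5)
Your proof is correct, and it is the standard argument from the cited source \cite[Remark 4]{MW15}: the paper itself states this as a Fact without proof, and your translation-invariance pigeonhole (choosing $N$ with $\mu(\sbgp X\setminus X^{\odot N})<\mu(X)$ so that $gX_1$ must meet $X^{\odot N}$) is exactly the intended argument, with the forward direction being the routine compactness observation that a definable $\sbgp X$ equals some $X^{\odot N}$, on which $\mu$ is finite under the paper's standing assumptions.
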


	Throughout the paper, we will always assume that the language 
	$\LL$ is rich enough (see \cite[Definition 3.19]{sS17}) to render the measure 
$\mu$ definable without parameters.

\begin{definition}\label{D:meas_def}
The measure $\mu$ of a  definably amenable pair $(G,X)$  is 
\emph{definable without parameters} if for every
$\LL$-formula $\varphi(x,y)$, every natural number $n\ge 1$ and 
every $\epsilon>0$, there is a partition of the $\LL$-definable set 
\[\{y \in \UU^{|y|}  \ | \ \varphi(\UU,y) \subseteq X^{\odot n}\}\] 
into $\LL$-formulae $\rho_1(y),\ldots, \rho_m(y)$ such that 
whenever a
pair $(b, b')$  in $\UU^{|y|}\times \UU^{|y|}$ realizes $\rho_i(y)\land\rho_i(y')$, then 
\[ |\mu(\varphi(x,b)) -\mu(\varphi(x,b') )|<\epsilon. \] 
\end{definition}  

The above definition is  a mere formulation of \cite[Definition 
3.19]{sS17} to the locally definable context, by imposing that the 
restriction of $\mu$ to every  definable subset $X^{\odot n}$ is 
definable in the sense of  \cite[Definition 3.19]{sS17}. In particular, 
a definable measure of  a definably amenable pair $(G,X)$ is 
\emph{invariant},  that is, its value is invariant under the action of 
$\mathrm{Aut}(\UU)$. 
Notice that whenever the measure $\mu$ is definable, given a definable subset  $\varphi(x,b)$ of measure $r$ and a value $\epsilon>0$, the tuple $b$ lies in some definable subset which is contained in 
\[
\left\{y\in \UU^{|y|} \ | \ r-\epsilon\le \mu(\varphi(\UU,y)) \le r+\epsilon \right\}.
\] 
Assuming that $\mu$ is definable, its extension to the $\sigma$-algebra generated by the 
definable subsets 
of  $\sbgp X$ is again invariant under left and right translations, as well as 
under automorphisms:  Indeed,  every 
automorphism $\tau$ of $\mathrm{Aut}(\UU)$ (likewise for left and right translations) gives rise to 
a measure $\mu^\tau$, such that  $\mu^\tau(Y) = 
\mu(\tau(Y))$ for every measurable subset $Y$ of 
$\sbgp X$.  Since $\mu^\tau$ 
agrees with $\mu$ on $\Ring$, we 
conclude that the $\sigma$-additive measure $\mu^\tau=\mu$ by the uniqueness of the 
extension. Thus, the measure of a Borel subset $Y$ in the space of 
types containing a  fixed definable set $Z$ in $\Ring$ depends solely on the type of the  
parameters defining $Y$. 

\begin{example}\label{E:amenable}
Let $(G_n)_{n\in \mathbb N}$ be an infinite family of groups, each 
with a distinguished finite subset $X_n$. Expand the language of  
groups to a language $\LL$ including a unary predicate and set 
$M_n$ to be an $\LL$-structure with universe $G_n$, equipped 
with its group operation, and interpret the predicate as $X_n$. 
Following \cite[Section 2.6]{eH12} we can further assume that 
$\LL$ has predicates $Q_{r,\varphi} (y)$ for each 
$r$ in $\mathbb Q^{\geq 0}$ and every formula  $\varphi(x,y)$ in 
$\LL$ such that $Q_{r,\varphi}(b)$ holds if and only if  the set 
$\varphi(M_n,b)$ is finite with $|\varphi(M_n,b)| \le r |X_n|$. 
Note that if the original language was countable, so is the extension 
$\LL$. 

Consider now the ultraproduct $M$ of the $\LL$-structures 
$(M_n)_{n\in \mathbb N}$ with respect to some non-principal 
ultrafilter $\mathcal U$. Denote by $G$ and $X$ the corresponding 
interpretations in a  sufficiently saturated elementary extension 
$\UU$ of  $M$. For each $\LL$-formula $\varphi(x,y)$ and every 
tuple $b$ in $\mathbb U^{|y|}$ such that $\varphi(\mathbb U,b)$ 
is a subset of $\sbgp X$, define  
\[
\mu ( \varphi(x,b)) = \inf \left\{ r \in \mathbb Q^{\geq 0} \ | \ 
Q_{r,\varphi}(b) \text{ holds} \right\},
\]
where we assign $\infty$ if  $Q_{r,\varphi}(b)$ holds for no value 
$r$. This is easily seen to be a finitely additive definable measure on 
the ring $\Ring$ of definable subsets of $\sbgp X$ which is 
invariant under left and right translation. In particular, the pair 
$(G,X)$ is definably amenable.

We will throughout this paper consider two main examples:
	\begin{enumerate}[(a)]
		\item The set $X$ 	equals $G$ itself, which happens 
		whenever  the subset $X_n=G_n$ for $\mathcal U$-almost all 
		$n$ in $\N$.  The normalized counting measure $\mu$ defined 
		above  is a definable Keisler measure  
		\cite{jK87} on the pseudo-finite group $G$. Note that in this case the ring of sets $\Ring$ coincides with the Boolean algebra of all definable subsets of $G$.
		\item For $\mathcal U$-almost all $n$, the set $X_n$ has \emph{small tripling}: there is a constant $K>0$ such that $|X_nX_nX_n|\leq K|X_n|$.
  The non-commutative Pl\"unnecke-Ruzsa inequality \cite[Lemma 
		3.4]{tT08} yields  that $|X_n^{\odot 
		m}| \leq K^{O_m(1)}|X_n|$, so the measure $\mu(Y)$ is 
		finite for every definable subset $Y$ of $\sbgp X$, since $Y$ is 
		then  contained in $X^{\odot m}$ for some $m$ in $\N$. In 
		particular,  the corresponding $\sigma$-additive measure $\mu$ is  again $\sigma$-finite. 
\end{enumerate}
 Whilst  each subset $X_n$ in the example (b) must 
 be finite, we do 
 not impose that the groups $G_n$ are finite. If the set 
 $X_n$ has tripling at most $K$, the set $X^{\odot 1} = X_n\cup 
X_n\inv\cup \{\mathrm{id}_G\}$ has size at most $2|X_n|+1$ 
and tripling at most 
$(CK^C )^2 $ with respect to some explicit absolute
constant $C>0$. Thus, taking ultraproducts, both 
structures $(G,X)$ and 
$(G,X^{\odot 1})$ will have the same sets of positive measure (or 
density), though the values may differ.  Hence, we may 
assume that in a definably 
amenable pair $(G, X)$ the corresponding definable set $X$ is 
symmetric and  contains the neutral element of $G$. 
\end{example}

The above example can be adapted to consider countable amenable groups. 
\begin{example}\label{E:Foelner}
Recall that a countable group is \emph{amenable} if it is equipped 
with a sequence $(F_n)_{n\in\mathbb N}$ of  finite sets of increasing 
cardinalities (so  $\lim\limits_{n\to \infty} |F_n| = 	\infty$) such 
that all $g$ in $G$, \[
\lim\limits_{n\to\infty} \frac{|F_n \cap g \cdot F_n|}{|F_n|} = 1.
\] 

Such a sequence of finite sets is called a left F\o lner sequence. The archetypal example of an amenable group is $\Z$ with left F\o lner sequence $F_n=\{-n,\ldots,n\}$. 

By 
\cite[Corollary 5.3]{iN64}, if a group is amenable, then there is a distinguished left 
F\o lner sequence where each $F_n$ is symmetric. In particular,  the 
sequence $(F_n)_{n\in \N}$ is also a right F\o lner sequence: \[
\lim\limits_{n\to\infty} \frac{|F_n \cap F_n\cdot g|}{|F_n|} = 1 
\ \text{ for all $g$ in $G$}.\] 

Notice also that a subsequence of a F\o lner sequence is again F\o lner 
and so is the sequence $(F_n\times F_n)_{n\in \N}$ in the group 
$G\times G$. Given an amenable group $G$ with a distinguished F\o 
lner  sequence $(F_n)_{n\in\mathbb N}$ consisting of 
symmetric sets as well as a non-principal ultrafilter $\mathcal U$ on 
$\mathbb N$, the ultralimit
\[
\mu (Y) = \lim_{n\to \mathcal U} \frac{|Y\cap F_n|}{|F_n|},
\]
induces a finitely additive measure on 
the Boolean algebra of subsets of $G$ which is 
invariant under left and right translation.

Starting from a fixed countable language $\mathcal L$ expanding the 
language of groups, we can render the above measure definable, 
similarly as in Example \ref{E:amenable}. Hence, we can consider 
every countable amenable group $G$ as a definably amenable pair, 
setting $X=G$. 
\end{example}

\begin{example}\label{E:fsg}

Every stable group $G$ is fsg and thus  equipped with a unique left and right translation invariant Keisler measure which is generically stable (see \cite{HPS13} \& \cite[Example 8.34.]{pSbook}). 

Similarly,  a compact semialgebraic Lie group $G(\mathbb R)$, or more generally a definably compact group $G$ definable in an o-minimal expansion of a real closed field is again fsg. If the group is the $\mathbb R$-rational points of a  compact semialgebraic Lie group, this measure coincides with the normalised Haar measure. 

Hence, we can consider in these two previous cases (stable and o-minimal compact) the group $G$ as a definably amenable pair, setting $X=G$. 
\end{example}

If a group $G$ is definable, so is every finite cartesian product. Moreover, the construction in Example \ref{E:amenable} and \ref{E:Foelner} can also be carried 
out  for a finite cartesian product  to produce for every $n\geq 1$ in 
$\N$ a definably amenable pair $(G^n, X^n)$, where  $\langle X^{ 
n} \rangle = \sbgp X^{n}$, equipped with a definable 
$\sigma$-finite measure $\mu_n$. Thus, the 
following assumption 
is satisfied by our examples \ref{E:amenable}, \ref{E:Foelner} and \ref{E:fsg}. 
\begin{hyp}\label{H:sigmafte}
For every $n\ge 1$, the pair $(G^n ,X^n)$ is 
definably amenable for a definable $\sigma$-finite measure  
$\mu_n$ in a compatible fashion: the measure $\mu_{n+m}$ extends the corresponding product measure $\mu_n\times\mu_m$. 
\end{hyp} 

The definability condition in Definition \ref{D:meas_def} implies 
that the function 
\[ \begin{array}{rccl}
F^\varphi_{\mu_n,C}:	& S_{m}(C)  &\to &   \mathbb R, \\[1mm]	 & 	\tp(b/C) &\mapsto  & \mu_n(\varphi(x,b)) \end{array}\]
is well-defined  and continuous for every $\LL_C$-formula 
$\varphi(x,y)$ with 
$|x|=n$ and $|y|=m$ such that $\varphi(x,y)$ defines a subset of $\sbgp X^{n+m}$.  Therefore, for such $\LL_C$-formulae 
	$\varphi(x,y)$, consider the $\LL_C$-definable subset $Y=\{ y \in \sbgp X^m \ | \ \exists x \, \varphi(x,y)\}$ and the corresponding clopen subset $[Y]$ of $S_{m}(C)$. Thus,  we 
	can consider the following measure $\nu$ on $\sbgp 
	X^{n+m}$,
\[
\nu(\varphi(x,y)) = \int_{q\in [Y]} 
F^\varphi_{\mu_n,C}(q) \,d\mu_m= \int_{y\in Y} 
\mu_n(\varphi(x,y)) \,d\mu_m.
\] By an abuse of notation, we will write $\int_{\sbgp X^m} 
\mu_n(\varphi(x,y)) \,d\mu_m$ for $\int_{Y} 
\mu_n(\varphi(x,y)) \,d\mu_m$. 

For the pseudo-finite measures  
described in Example \ref{E:amenable}, the above integral equals the ultralimit 
\[ \lim_{k\to \mathcal U}\frac{1}{|X_k|^m}\sum_{y\in \langle X_k \rangle^m} \frac{|\varphi(x,y)|}{|X_k|^n} , \]
so $\nu$ equals $\mu_{n+m}$ and consequently Fubini-Tonelli 
holds, 
see (the proof of) \cite[Theorem 19]{BT14}. The same holds 
whenever the measure is given by densities with respect to a F\o lner 
sequence in an amenable group, as in Example \ref{E:Foelner}. For 
arbitrary definably amenable pairs, whilst the measure $\nu$ 
extends the product measure $\mu_n 
\times \mu_m$, it need not be {\it a priori} $\mu_{n+m}$ 
\cite[Remark 3.28]{sS17}. 
  Keisler \cite[Theorem 6.15]{jK87} exhibited a Fubini-Tonelli type 
theorem for general Keisler measures under certain conditions. These conditions hold for the unique generically stable translation invariant measure of an fsg group, see Example \ref{E:fsg}.   We will impose a 
further restriction on the definably amenable pairs we will 
consider, taking Examples \ref{E:amenable}, \ref{E:Foelner} and \ref{E:fsg}  as a guideline.

\begin{hyp}\label{H:Fubini}
For every definably amenable pair $(G, X)$ and its corresponding 
compatible system of definable measures $(\mu_n)_{n\in \mathbb N}$ on the 
Cartesian  powers of $\sbgp X$, the Fubini condition holds: 
Whenever a definable subset  of $\sbgp X^{ n+m}$ is given by an  
$\LL_C$-formula $\varphi(x,y)$ with $|x|=n$ and 
$|y|=m$, the following  equality holds:
\[
\mu_{n+m}(\varphi(x,y)) = \int_{\sbgp X^m} 
\mu_n(\varphi(x,y)) \,d\mu_m =  \int_{\sbgp X^n} 
\mu_m(\varphi(x,y)) \,d\mu_n.
\]  
(Note that the above integrals do not run over the locally definable 
sets $\sbgp X^m$ and $\sbgp X^n$, but rather over definable 
subsets, for $\varphi(x,y)$ is itself definable). 
\end{hyp}

Whilst this assumption is stated for definable sets, it 
extends to certain Borel sets, whenever the language $\LL_C$ is 
countable.   

\begin{remark}\label{R:Fubini_Borel}
	Assume that $\LL_C$ is countable and fix a natural number $k\ge 1$. Following \cite[Definition 2.6]{CGH23}, for every Borel subset $Z$ of $S_{n+m}(C)$ of types $q(x,y)$ with 
	$|x|=n$ and $|y|=m$, set \[ Z(x, b)=\left\{ p\in S_{n}(\UU)  \ | \ \tp(a, b/C) \text{ belongs to } Z \text{ for some  $a$ realizing } p\restr{C, b}\right\}.\]
 Note that $Z(x, b)$ only depends on $\tp(b/C)$ by \cite[Lemma 
 2.7]{CGH23}. If $Z$ is contained in the clopen set determined by the 
 $\LL_C$-definable set $(X^{\odot k})^{n+m}$, we define 
 analogously 
 as before a function
 \[ \begin{array}{rccl}
F^Z_{\mu_n,C}:	& S_{m}(C)  &\to &   \mathbb R, \\[1mm]	 & 	\tp(b/C) &\mapsto  & \mu_n(Z(x,b)) \end{array}.\]
This function is Borel, and thus measurable, by the definability of the 
measure as well as the monotone convergence theorem, for it agrees 
with $F^\varphi_{\mu_n,C}$ whenever $Z$ is the clopen 
$[\varphi]$. Furthermore, the following identity holds: 	\[
	\mu_{n+m}(Z(x,y)) = \int_{\sbgp X^m} 
	\mu_n(Z(x,y)) \,d\mu_m =  \int_{\sbgp X^n} 
	\mu_m(Z(x,y)) \,d\mu_n,
	\]
	by a straightforward application as 
	in \cite[Theorem 20]{BT14} of the monotone class theorem, using the fact that $\mu(X^{\odot 
	k})$ is finite. In particular, the above identity of integrals holds for 
	every Borel set of finite measure by regularity.
\end{remark}
\begin{remark}\label{R:Examples_Assumptions}
The examples listed in Examples \ref{E:amenable}, \ref{E:Foelner} and \ref{E:fsg}  satisfy both Assumptions \ref{H:sigmafte} and \ref{H:Fubini}.
\end{remark}

\medskip
{\bf  Henceforth, the language is countable and all definably amenable pairs satisfy Assumptions 
\ref{H:sigmafte} and \ref{H:Fubini}.}
\medskip

Adopting some terminology from additive combinatorics, we shall use 
the word \emph{density} for the value of the measure of a subset in 
$\Ring$ of a definably amenable pair  $(G,X)$.  A (partial) type is 
said to be \emph{weakly random} if it contains a 
definable subset in $\Ring$ of  positive density but no definable subset in $\Ring$ of 
density $0$. Note that every weakly random partial type 
$\Sigma(x)$ over a 
parameter set $A$ implies a definable set $X^{\odot k}$ in $\Ring$ for some $k$ in $\N$ and thus it can be completed to a weakly random complete type over 
any arbitrary set $B$ containing $A$, since the collection of formulae
\[ \Sigma(x)\cup\left\{X^{\odot k} \setminus Z \,|\, Z  \
\text{in $\Ring$ is $B$-definable of density $0$}\right\}\]
is finitely consistent. Thus, weakly random types exist 
(yet the partial type $x=x$ is not  weakly random whenever $G\neq 
\sbgp X$).  As usual, we say that an element $b$ of $G$ is 
weakly random over $A$ if  
$\tp(b/A)$ is.   

Weakly random elements satisfy a weak notion of transitivity.

\begin{lemma}\label{L:trans}
	Let $b$ be weakly random over a set of parameters $C$ and $a$ be 
	weakly random over $C, b$. The pair $(a,b)$ is weakly random 
	over $C$. 
\end{lemma}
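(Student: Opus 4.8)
The plan is to unwind the definitions: $(a,b)$ being weakly random over $C$ means that the type $\tp(ab/C)$ — where I read $ab$ as the tuple in $G^2$, living inside $\sbgp{X}^2$ — contains some $\LL_C$-definable set of positive $\mu_2$-density and no $\LL_C$-definable set of $\mu_2$-density zero. So let $\varphi(x,y)$ be an arbitrary $\LL_C$-formula with $(a,b)\models\varphi$; I must show $\mu_2(\varphi(x,y))>0$. First I would invoke Remark \ref{R:Fubini_Borel} (or Assumption \ref{H:Fubini} directly, after shrinking $\varphi$ to a definable subset of some $(X^{\odot k})^2$ containing $(a,b)$, which exists since $\sbgp{X}^2$ is a countable increasing union of such sets) to write
\[
\mu_2(\varphi(x,y)) = \int_{\sbgp{X}} \mu_1(\varphi(x,c))\, d\mu_1(c),
\]
the inner integral being over the $\LL_C$-definable set $\psi(y):=\exists x\,\varphi(x,y)$.

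Next I would use the hypothesis on $a$: since $a$ is weakly random over $C,b$, the $\LL_{C,b}$-formula $\varphi(x,b)$ is realized by $a$, hence $\mu_1(\varphi(x,b))>0$. Now I would use definability of $\mu_1$ without parameters (Definition \ref{D:meas_def}): the density $\mu_1(\varphi(x,b))$ depends only on $\tp(b/C)$, and more precisely there is some $\epsilon>0$ and an $\LL_C$-formula $\rho(y)$ in the relevant partition with $b\models\rho$ such that $\mu_1(\varphi(x,c))>\epsilon>0$ for every $c\models\rho$ lying in $\psi$. (One first fixes $\epsilon$ with $\mu_1(\varphi(x,b))>2\epsilon$, then picks the partition piece $\rho$ through $b$ associated to this $\epsilon$; continuity of $\tp(c/C)\mapsto\mu_1(\varphi(x,c))$ gives the uniform lower bound on $\rho\wedge\psi$.) Consequently
\[
\mu_2(\varphi(x,y)) \ \ge\ \int_{\rho(\sbgp{X})\cap\psi(\sbgp{X})} \mu_1(\varphi(x,c))\, d\mu_1(c) \ \ge\ \epsilon\cdot \mu_1\bigl(\rho(y)\wedge\psi(y)\bigr).
\]

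Finally I would show $\mu_1(\rho(y)\wedge\psi(y))>0$, which is exactly where the hypothesis on $b$ enters: $b$ realizes the $\LL_C$-formula $\rho(y)\wedge\psi(y)$ (it satisfies $\rho$ by choice, and satisfies $\psi$ because $\varphi(a,b)$ holds), and $b$ is weakly random over $C$, so no $\LL_C$-formula it satisfies can have density $0$; hence $\mu_1(\rho\wedge\psi)>0$ and therefore $\mu_2(\varphi(x,y))>0$. To finish, I also need that $\tp(ab/C)$ contains no definable set of density $0$: that follows from the general fact quoted in the excerpt that any weakly random partial type completes to a weakly random complete type — indeed the argument above shows every $\LL_C$-formula in $\tp(ab/C)$ has positive density, which simultaneously gives a positive-density set in the type and rules out density-zero sets. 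The main obstacle is the bookkeeping around the definability-without-parameters clause: one must be careful that the partition $\rho_1,\dots,\rho_m$ attached to $\varphi$, $n$, $\epsilon$ indeed produces a uniform positive lower bound on a single $\LL_C$-definable piece containing $b$, rather than merely oscillation control; packaging this as "the density function is continuous on the (compact) type space, positive at $\tp(b/C)$, hence bounded below on a clopen neighbourhood defined over $C$" is the cleanest route and is the one step deserving care.
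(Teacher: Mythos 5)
Your proof is correct and follows essentially the same route as the paper's: apply Fubini (Assumption \ref{H:Fubini}/Remark \ref{R:Fubini_Borel}) to $\mu_{n+m}(Z)=\int\mu_n(Z_y)\,d\mu_m$, use weak randomness of $a$ over $C,b$ to get a positive lower bound on the fiber over $b$, use definability of the measure to spread that bound over a $C$-definable set containing $b$, and use weak randomness of $b$ to give that set positive density. Your explicit use of the partition from Definition \ref{D:meas_def} to produce the definable piece $\rho$ is in fact a slightly more careful rendering of the paper's step, where the set $Y=\{y\,|\,\mu_n(Z_y)\ge r\}$ is simply asserted to be $C$-definable.
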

\begin{proof}
	We need to show that every $C$-definable subset $Z$ of $\sbgp X^{n+m}$ 
	containing the pair $(a,b)$ has positive density with respect to the product 
	measure $\mu_{n+m}$, where $n=|a|$ and $m=|b|$. Since $a$ is weakly random over $C, b$, the fiber $Z_{b}$ of $Z$ 
	over $b$ has measure $\mu_n(Z_{b})= 2r$  for 
	some real number $0< r$. Hence $b$ belongs 
	a  $C$-definable subset $Y$ of \[ \left\{ y \in \UU^m \ |\   r \le \mu_n(Z_y) \le 
3r \right\},\]  by the definability of the measure. In particular, the measure  
	$\mu_m(Y)>0$. Thus, \[ \mu_{n+m}(Z) 
	=\int_{\sbgp X^m}
	\mu_n(Z_y) \,  d\mu_m \geq \int_{Y} \mu_n(Z_y) \,  d\mu _m 
	\geq 
	\mu_m(Y) r >0,\] as desired. 
\end{proof}
Note that the tuple $b$ above may not be weakly random over $C, 
a$. To remedy the failure of  symmetry in the notion of randomness, 
we will introduce  \emph{random} types, which will play a 
fundamental role in  Section \ref{S:pqr}. Though random types already appear in \cite[Subsection 2.23]{eH13} (see also \cite[Subsection 2.20]{eH12}), we will take the opportunity here to recall Hrushovski's definition of \emph{$\omega$-randomness}. All the ideas here until the end of this section are due to Hrushovski and we are merely writing down some of the details for the sake of the presentation. 

Fix some countable elementary substructure $M$ and some $Y$ in $\Ring$ definable over $M$ (so $Y\subseteq (X^{\odot k})$ for some $k$ in $\N$). As in Remark \ref{R:Cara}, we denote by $S_{Y^m}(M)$ the compact subset of the space of types over $M$ containing the $M$-definable subset $Y^m$.  

\begin{definition}\label{D:BooleanRandom} 
 Denote by $\BB$ the smallest Boolean algebra of subsets of $S_{Y^m}(M)$, as $m$ varies, containing all clopen subsets of $S_{Y^m}(M)$ and closed under the following operations: 
 \begin{itemize}
 \item The preimage of a set $W\subseteq S_{Y^m}(M)$ in $\BB$ 
 under the natural continuous  map $ S_{Y^n}(M)\to S_{Y^m}(M)$ 
 given by the restriction to a choice of $m$ coordinates belongs again 
 to $\BB$. 
 \item If $Z\subseteq S_{Y^{n+m}}(M)$ belongs to $\BB$, then so 
 does \[ (F^Z_{\mu_n,M})\inv (\{0\}) =\left\{ \tp(b/M) \in 
 S_{Y^m}(M) \ | \ \mu_n(Z(x, b))=0 \right\} ,\] with $Z(x, b)$ as 
 in Remark \ref{R:Fubini_Borel}.   
 \end{itemize}
 \end{definition}

Note that each element of $\BB$ is a Borel subset of the appropriate space of types by Remark \ref{R:Fubini_Borel}. Furthermore, it is countable since it can be  inductively built from the Boolean algebras of clopen subsets of the $S_{Y^m}(M)$'s  by adding in the next step all Borel sets of the form $(F^Z_{\mu_n,M})\inv (\{0\})$ and closing under  Boolean operations.  The collection $\BB$ contains new sets which are neither open nor closed. 
\begin{definition}\label{D:random}
Let $Y$ in $\Ring$ be definable over the countable 
	elementary substructure $M$. A $n$-tuple $a$ of elements in $Y$ 
	is \emph{random} over $M\cup B$, where $B$ is some countable 
	subset of parameters, if  $\mu_{n}(Z(x,b))>0$ for every finite 
	subtuple $b$ in $B$ and every Borel subset $Z$ in $\BB$ with 
	$\tp(a, b/M)$ in $Z$. 

For $B=\emptyset$, we simply say that the tuple is \emph{random} 
over $M$.
\end{definition}
\begin{remark}\label{R:random_wide} Since $\BB$ contains all 
clopen sets given by $M$-definable subsets, it is easy to see that a 
tuple random over $M\cup B$ is weakly random over $M\cup B$, 
which 
justifies our choice of terminology (instead of  using the term \emph{wide type} from \cite{eH12}). 

Randomness is preserved under the group law: If $a$ is an element of $\sbgp X$ random over $M\cup B$, then so are $a\inv$ and  $b\cdot a$ for every element $b$ in $B\cap \sbgp X$. 

Furthermore, note that randomness is a property of the 
type: If $a$ and  $a'$ have the same type over $M\cup B$, then $a$ is 
random over $M\cup B$ if  and only if $a'$ is. 
\end{remark}

\begin{remark}\label{R:posdens_random}
Since $\BB$ is countable, the $\sigma$-additivity of the measure 
yields that every measurable subset of $S_{Y^m}(M\cup B)$, with 
$B$ countable, 
 of positive density contains a random element over $M \cup B$. In 
 particular, every weakly random definable subset of $Y^m$ contains 
 random elements over $M, B$. 
\end{remark}

Randomness is a symmetric notion. 

\begin{lemma}\label{L:sym_random}\textup{(}\cite[Exercise 
2.25]{eH13}\textup{)} 
Let $Y$ in $\Ring$ be definable over the countable 
	elementary substructure	$M$. A finite tuple $(a,b)$ of elements in $Y$ is random over $M$ if 
and only if  $b$ is 
random 
over $M$ and $a$ is random over $M,b$. 
\end{lemma}
\begin{proof}
 Assume first that $(a,b)$ is random over 
$M$. Clearly so is $b$ by Fubini and Remark \ref{R:Fubini_Borel}. 
Thus we 
need only prove that $a$ is random over $M, b$. Suppose for a  
contradiction that $\mu_{|a|}(Z(x, b))=0$ for  some $Z \subseteq 
S_{Y^{|a|+|b|}}(M)$ 
of $\BB$  
containing $\tp(a,b)$.  The type of the pair $(a,b)$ belongs to 
\begin{multline*}    
\widetilde Z= Z\cap \pi\inv\left((F^{Z}_{\mu_{|a|},M})\inv(\{0\})\right) = \\ Z\cap \left\{\tp(c,d/M) \in S_{Y^{|a|+|b|}}(M) \ | \ 
\mu_{|a|}(Z(x, d)) =0\right\},\end{multline*}  
where $\pi=\pi_{|a|+|b|,|b|}$ is the corresponding restriction map. 
Now, the set $\widetilde Z$
belongs to $\BB$ and contains $(a,b)$, so it cannot have density 
$0$. However, Remark \ref{R:Fubini_Borel} yields \[ 0< \mu_{|a|+|b|}(\widetilde Z) 
= \int_{Y^{|b|}} \mu_{|a|}(\widetilde{Z}(x, d)) \, d\mu_{|b|} \le \int_{Y^{|b|}} \mu_{|a|}(Z(x, d)) \, d\mu_{|b|} =0 
,\] 
which gives the desired contradiction. 

Assume now that $b$ is random over $M$ and $a$ is random over 
$M, b$. Suppose for a contradiction that $\tp(a,b/M)$  lies in some Borel $Z(x,y)$ of $\BB$ with $\mu_{|a|+|b|}(Z)=0$. By Remark \ref{R:Fubini_Borel}, 
\[ 
0=\mu_{|a|+|b|}(Z)= \int_{Y^{|b|}} \mu_{|a|}(Z(x, d)) \, d\mu_{|b|},  
\] so $\mu_{|a|}(Z(x, d))=0$ for $\mu_{|b|}$-almost all 
types $\tp(d/M)$ in 
$S_{Y^{|b|}}(M)$.  Hence, the set  
$(F^{Z}_{\mu_{|a|},M})\inv(\{0\})$ has measure 
$\mu_{|b|}(Y^{|b|})$. Since $a$ is random over $M, b$, we have that 
$\mu_{|a|}(Z(x, b))>0$, so $\tp(b/M)$ belongs to the 
complement of $(F^{Z}_{\mu_{|a|},M})\inv(\{0\})$, which belongs 
to $\BB$ and has $\mu_{|b|}$-measure $0$. We conclude that the 
element $b$ 
is not random over $M$, which gives the desired contradiction. 
\end{proof}

Symmetry of randomness will allow us in Sections \ref{S:pqr} and \ref{S:ultra} to transfer ideas arisen from the study of definable groups in simple theories to the pseudo-finite context as well as to definably compact groups definable in o-minimal expansions of real closed fields. Whilst weakly randomness is not symmetric, a weak form of symmetry holds (as  pointed out by the anonymous referee, to whom we would like to express our sincere gratitude again). 

\begin{lemma}[The referee's lemma]\label{L:ref}
Let $Y$ in $\Ring$ be a subset of positive density definable over the countable elementary substructure $M$. Given  two finite tuples $a$ and $b$ of elements in $Y$ with $a$ weakly random over $M$ and $b$ random over $M,a$, then $a$ is weakly random over $M,b$.
\end{lemma}
\begin{proof}
Assuming otherwise, there is an $M$-definable set $Z$ containing $(a,b)$ such that the fiber $Z_b$ has $\mu_{|a|}$-measure $0$. Definability of the measure \ref{D:meas_def}  yields that the set 
\[
W=(F_{\mu_{|a|},M}^Z)^{-1}(\{0\})= \left\{ \tp(d/M) \in S_{Y^{|b|}}(M) \ | \ \mu_{|a|}(Z_d)=0\right\}
\] 
is closed and thus it can be written as a countable intersection $W=\bigcap_{m\in \N} W_m$ of $M$-definable sets with $W_{m+1}\subseteq W_m$. Now, the closed set $[Z(x, y)] \cap W(y)$ belongs to $\BB$ and contains $\tp(a, b/M)$, so $\mu_{|b|}([Z(a, y)]\cap W(y))>0$, since $b$ is random over $M,a$.

\begin{claim*}
There exists some $M$-definable subset $V$ containing $a$ such that \[ \mu_{|b|}([Z(a', y)]\cap W(y))>0\]  for all $a'$ in $V$. 
\end{claim*}
Note that $V$ has positive density, for $\tp(a/M)$ is weakly random.  
\begin{claimproof*}
Assume for a contradiction that this is not the case. Since both the language and $M$ are countable, we may list all $M$-definable subsets containing $a$ as $\{V_n\}_{n\in \N}$ with $V_{n+1}\subseteq V_n$. Therefore, for every $n$ in $\N$ there is some $a_n$ in $V_n$ with $\mu_{|b|}([Z(a_n, y)]\cap W(y))< \frac{1}{n+1}$. As $W$ is a countable intersection of the $W_m$'s, there is some $m_n$ in $\N$ such that 
\[
\mu_{|b|}\left(Z(a_n, y) \cap W_{m_n}(y)\right) < \frac{1}{n+1}.
\] Notice that we may construct the sequence such that $m_{n+1} > m_n$. 
Set \[ \theta_{<} (Z, W_{m_n}) =\left\{ x\in Y^{|a|} \ | \ \mu_{|b|}(Z(x, y) \cap W_{m_n}(y)) <\frac{1}{n+1}\right\}\] and  define $\theta_{\le}(Z,W_{m_n})$ analogously.  By definability of the measure, there is some $M$-definable subset $\theta(Z, W_{m_n})$ such that 
\[ \theta_<(Z, W_{m_n}) \subseteq  \theta(Z, W_{m_n}) \subseteq \theta_\le(Z, W_{m_n}).\]
In particular, we have that $\theta(Z,W_{m_{n+1}})\subseteq \theta(Z,W_{m_n})$ for $m_{n+1}> m_n$.
Now, the collection of $\LL_M$-formulae 
$\{V_n(x) \land \theta(Z, W_{m_n})(x) \}_{n\in \N}$ cannot be consistent, for it would yield the existence of a tuple $a'$ realizing $\tp(a/M)$ with 
\[ 
\mu_{|b|}\left([Z(a', y)] \cap W(y)\right) \le \mu_{|b|}\left(Z(a', y) \cap W_{m_n}(y)\right) \le \frac{1}{n+1}\] for every $n$ in $\N$, so $ \mu_{|b|}\left([Z(a', y)] \cap W(y)\right) =0 < \mu_{|b|}([Z(a, y)]\cap W(y))$, which is a contradiction. By compactness, there exists some $\ell$ in $\N$ such that no realization of $V_\ell$ satisfies some $\theta(Z, W_{m_j})$ with $j\le \ell$. However, the element $a_{\ell}$ belongs to $V_\ell\cap \theta_<(Z, W_{m_\ell})$, so $a_{\ell}$ lies in every $\theta(Z, W_{m_j})$ with $j\le \ell$, which gives the desired contradiction. 
\end{claimproof*}

Consider now the closed set $W'=[V(x)]\cap [Z(x,y)]\cap W$. The Fubini condition \ref{R:Fubini_Borel} yields that 
\begin{multline*} 
0\stackrel{\text{ Claim }}{<}  \int_{\tp(c/M)\in [V]}  
	\mu_{|b|}([Z(c,y)]\cap W)  \,d\mu_{|b|} = \\ = \mu(W')=   \int_{\tp(d/M) \in W} 
	\mu_{|a|}(V(x)\cap Z(x,d)) \,d\mu_{|a|} \le \\  \le \int_{\tp(d/M) \in W} 
	\mu_{|a|}(Z(x,d)) \,d\mu_{|a|}= 0.
 \end{multline*} 
We deduce from the above contradiction that $a$  lies in  no  definable set $Z_b$ over $M, b$ of density $0$, so $a$ is weakly random over $M, b$, as desired.
\end{proof}

\section{Forking and measures}\label{S:stable} 
As in Section \ref{S:Fubini}, we work inside a sufficiently saturated 
structure and a definably amenable pair $(G, X)$ in a fixed 
countable language $\LL$ satisfying Assumptions 
\ref{H:sigmafte} and \ref{H:Fubini}, though the classical 
notions of forking 
and stability do not 
require the presence of a group nor of a measure. 

Recall that a definable set $\varphi(x,a)$ \emph{divides} over a 
subset $C$  of parameters if  there exists an indiscernible sequence 
$(a_i)_{i\in \N}$ over $C$ with $a_0=a$ such that the intersection 
$\bigcap_i \varphi(x,a_i)$ is empty. Archetypal examples of 
dividing formulae are of the form $x=a$ for some element $a$ not 
algebraic over $C$. Since dividing formulae need not be closed 
under finite disjunctions, witnessed for example by a circular order, 
we say 
that a fomula $\psi(x)$ \emph{forks} over $C$ if it belongs to the 
ideal generated by the formulae dividing over $C$, that is, if $\psi$ 
implies a finite disjunction of formulae, each dividing over $C$. A 
type \emph{divides}, resp. \emph{forks} over $C$, if it contains an 
instance which does. 

\begin{remark}\label{R:posdens_nf}
Since the measure is invariant under automorphisms and 
$\sigma$-finite, no definable subset of  $\sbgp X$ of 
positive density divides, and thus no weakly random type forks 
over the empty-set, see \cite[Lemma 2.9 \& Example 2.12]{eH12}.
\end{remark}

Non-forking need not define a tame notion of independence, for 
example it need not be symmetric, yet it behaves extremely well 
with respect to certain invariant relations, called stable. 

\begin{definition}\label{D:stable}
An $A$-invariant relation $R(x,y)$ is \emph{stable} if there is no 
$A$-indiscernible sequence $(a_i, b_i)_{i \in \N}$ such that \[ R(a_i, 
b_j)  \text{ 
holds if and only if }  i<j.\]
\end{definition}
A straight-forward Ramsey argument yields that the collection of 
invariant stable relations is closed under Boolean combinations.  
Furthermore, an $A$-invariant  relation  
is stable if there is no $A$-indiscernible sequence as in the 
definition of  length some fixed infinite ordinal. 

The following remark will be very useful in the following sections.

\begin{remark}\label{R:stable_sym}\textup{(}\cite[Lemma 
2.3]{eH12}\textup{)} 
Suppose that the type 
$\tp(a/M, b)$ does not fork over  the  elementary substructure $M$ 
and that the $M$-invariant relation $R(x,y)$ is stable. 
Then the 
following are equivalent:
\begin{enumerate}[(a)]
\item The relation $R(a,b)$ holds. 
\item The relation $R(a', b)$ holds, whenever $a'\equiv_M a$ and 
$\tp(a'/Mb)$ does not fork. 
\item The relation $R(a', b)$ holds, whenever $a'\equiv_M a$ and 
$\tp(b/Ma')$ does not fork. 
\item The relation $R(a', b')$ holds, whenever $a'\equiv_M a$ and $b'\equiv_M a$ such that $\tp(a'/M, b')$ or $\tp(b'/M, a')$ does not fork. 
\end{enumerate}
\end{remark}

A clever use of  the Krein-Milman theorem on the locally 
compact Hausdorff topological real vector space of all $\sigma$-additive probability measures allowed Hrushovski to  
prove the  following striking result (the case $\alpha=0$ is an easy consequence of the inclusion-exclusion principle): 
\begin{fact}\label{F:stable_measure0}\textup{(}\cite[Lemma  2.10 
\& Proposition
 2.25]{eH12}\textup{)} 
Given a real number $\alpha$ and $\LL_M$-formulae $\varphi(x, 
z)$ and $\psi(y, z)$ with parameters over an elementary substructure $M$, the $M$-invariant relation on the definably 
amenable pair $(G,X)$ \[ R^\alpha_{\varphi, \psi}(a,b) 
\Leftrightarrow \mu_{|z|}\big(\varphi(a,z) \land 
\psi(b,z)\big)=\alpha\] is stable. 
In particular, for any partial types $\Phi(x,z)$ and $\Psi(y,z)$ over $M$, the relation 
\[
Q_{\Phi, \Psi}(a,b) \Leftrightarrow  \ \Phi(a,z) \land \Psi(b,z) \text{ is weakly random}
\]  is stable. 
\end{fact}
Strictly speaking, Hrushovski's result in its 
original version is stated for arbitrary Keisler measures (in any theory). 
To deduce the statement above it suffices to normalize the measure 
$\mu_{|z|}$ by $\mu_{|z|}((X^{|z|})^{\odot k})$ for some natural 
number $k$  such that $(X^{|z|})^{\odot k}$ contains the 
corresponding instances of $\varphi(x,z)$ and $\psi(y,z)$.

We will finish this section with a summarized version of Hrushovski's 
stabilizer theorem tailored to the context of definably amenable 
pairs. Before stating it, we first need to introduce some notation.

\begin{definition}\label{D:G00}
Let $X$ be a definable subset of a definable group $G$ and let $M$ be 
an elementary substructure. We denote by $\Conn M$  the 
intersection of all subgroups of $\sbgp X$ type-definable over $M$  
and of 
bounded index. 
\end{definition}
If a subgroup of bounded index  type-definable over $M$ 
exists, the 
subgroup $\Conn M$ is again type-definable 
over $M$ and has bounded index, see \cite[Lemmata 3.2 \& 
3.3]{eH12}. Furthermore, it is also normal in  $\sbgp X$ \cite[Lemma 3.4]{eH12}.

\begin{fact}\textup{(}\cite[Theorem 3.5]{eH12} \& 
\cite[Theorem
	2.12]{MOS18}\textup{)}\label{F:Hr}
	Let $(G,X)$ be a definably amenable pair and let $M$ be an elementary substructure. The subgroup $\Conn M$ exists and equals
	\[
	\sbgp X_M^{00} = (p\cdot p\inv)^2,
	\] for any weakly random type $p$ over $M$,	where we identify a type with 
	its realizations in the ambient structure $\UU$. 
	Furthermore, the set $pp\inv p$ is a coset of $\Conn M$. For 
	every  
	element $a$ in $\Conn M$  weakly random  over $M$, the partial 
	type  $p \cap a\cdot p$ is weakly random.  In particular, every weakly random element in $\Conn M$ over $M$ lies in $p\cdot p\inv$. 
\end{fact}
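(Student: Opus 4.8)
The plan is to deduce this from Hrushovski's stabilizer theorem \cite[Theorem 3.5]{eH12}, in the sharpened form of \cite[Theorem 2.12]{MOS18}, once the hypotheses have been arranged in our localized setting. Since $p\subseteq X^{\odot k}$ for some $k$, and since by Assumption \ref{H:sigmafte} together with the non-commutative Pl\"unnecke-Ruzsa bound \cite[Lemma 3.4]{tT08} every piece $X^{\odot \ell}$ has finite measure, normalizing $\mu$ on a sufficiently large such piece puts us in the setting of a definable Keisler measure to which that machinery applies; the stability input it requires is precisely Proposition \ref{P:stable_measure0}. Concretely, I would work with the \emph{stabilizer}
\[
S \;=\; \{\, g\in\sbgp X \ :\ g\cdot p\,\cap\, p \text{ is weakly random}\,\}
\]
(or rather the associated type-definable subgroup) and establish, in order: (i) $S$ is a subgroup; (ii) $S$ has bounded index; (iii) $S$ is type-definable over $M$; and (iv) $\Conn M = S = (pp\inv)^2$, with $pp\inv p$ a coset of $S$.

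For (i), closure under inversion is immediate from the left-right invariance of $\mu$, as $g\cdot p\cap p$ is weakly random exactly when $p\cap g\inv\cdot p$ is. Closure under products is the crux: given $g_1,g_2\in S$ one wants a realization of $p$ lying simultaneously in a translate witnessing $g_1\in S$ and in a translate witnessing $g_2\in S$, so that $(g_1 g_2)\cdot p\cap p$ is again weakly random. Amalgamating these two density conditions over $M$ --- the job of the independence theorem in the simple-theory proof --- is here carried out using the symmetry of randomness (Lemma \ref{L:sym_random}) together with the stability of the weak-randomness relation $Q_{\Phi,\Psi}$ of Proposition \ref{P:stable_measure0} and the transfer principle of Remark \ref{R:stable_sym}, which let one move the relevant positive-density conditions across random (hence non-forking, cf.\ Remark \ref{R:posdens_nf}) conjugates and patch them on a common realization. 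For (ii), the cosets of $S$ are the almost-equality classes of the translates $g\cdot p$: a fixed formula in $p$ has positive density, all its translates share that density and sit inside one finite-measure piece, and translates in distinct classes have no common weakly random part, so $\sigma$-finiteness of $\mu$ leaves only boundedly many classes --- this is the chain condition coming from the stability of the relations $R^\alpha_{\varphi,\psi}$. For (iii), $S$ is cut out by $M$-invariant stable relations of bounded width, hence is type-definable over $M$.

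Granting (i)--(iii), the component $\Conn M$ exists and, being the intersection of all bounded-index type-definable-over-$M$ subgroups, satisfies $\Conn M\subseteq S$. For the reverse inclusions: $S\subseteq pp\inv$, since for $g\in S$ any realization $a$ of the weakly random type $g\cdot p\cap p$ gives $a\models p$ and $g\inv a\models p$, whence $g=a(g\inv a)\inv\in pp\inv$, and since $\mathrm{id}_G\in pp\inv$ also $S\subseteq pp\inv\subseteq(pp\inv)^2$; conversely $S\subseteq H$ for every bounded-index type-definable-over-$M$ subgroup $H$, because the bounded $M$-invariant equivalence relation ``$x\inv y\in H$'' is stable, and combining this with Remark \ref{R:stable_sym} and the symmetry of randomness forces the realizations $a$ and $g\inv a$ of $p$ above into a single coset of $H$. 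Intersecting over all $H$ gives $S\subseteq\Conn M$, so the chain of inclusions collapses to $\Conn M = S = (pp\inv)^2$, and a variant of the same argument identifies $pp\inv p$ with one coset of $S$. The last assertion is then immediate: an element $a\in\Conn M = S$ that is weakly random over $M$ does not lie on the boundary of $S$, so $p\cap a\cdot p$ is weakly random by the very definition of $S$. The step I expect to be the real obstacle is the product-closure in (i): everything hinges on executing the amalgamation over $M$ by purely measure-theoretic means, and the delicate point is to set up the various localizations to finite-measure pieces uniformly enough that the stability and symmetry-of-randomness inputs all apply at once.
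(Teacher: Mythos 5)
First, a point of comparison: the paper does not prove this statement at all. It is imported as a black-box Fact from \cite[Theorem 3.5]{eH12} and \cite[Theorem 2.12]{MOS18} (the only in-paper remark is how to reduce the localized setting to those references by normalizing $\mu$ on a finite-measure piece $X^{\odot k}$, exactly as you do at the start). So what you have written is an attempted reconstruction of the stabilizer theorem itself, and it must be judged on its own terms. Your overall architecture --- build a stabilizer, show it is a bounded-index type-definable subgroup over $M$, identify it with $(pp\inv)^2$ and with $\Conn M$ --- is the correct one, and you are right that the amalgamation step is where the work lies.

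The genuine gap is the conflation of the set $\stt(p)=\{g\in\sbgp X \ :\ g\cdot p\cap p \text{ weakly random}\}$ with the stabilizer \emph{group}. In Hrushovski's theorem the group is not $\stt(p)$ but $\stab(p)=\stt(p)\cdot\stt(p)$: the set $\stt(p)$ is in general neither closed under products nor type-definable (it is only a countable intersection of countable unions of definable sets, hence Borel), so your steps (i) and (iii) fail for the object you actually defined, and the parenthetical ``or rather the associated type-definable subgroup'' conceals precisely the content of the theorem, namely that the \emph{square} of $\stt(p)$ is a type-definable group and equals $(pp\inv)^2$. This conflation has two concrete downstream consequences. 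First, your proof of the final clause is circular: if $S=\stt(p)$ were literally equal to $\Connpm M$\let\Connpm\Conn$\Conn M$, then \emph{every} element of $\Conn M$, weakly random or not, would satisfy ``$p\cap a\cdot p$ weakly random'' by definition, and the Fact would have no reason to restrict to weakly random $a$. The restriction is there because $\stt(p)\subsetneq\Conn M$ in general; the substantive assertion ``every weakly random element of $\stab(p)$ lies in $\stt(p)$'' is exactly the part of the theorem for which the stability/symmetry machinery (Proposition \ref{P:stable_measure0}, Remark \ref{R:stable_sym}, Lemma \ref{L:sym_random}) is needed, and it cannot be dismissed with ``$a$ does not lie on the boundary of $S$''. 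Second, with the correct object $\stt(p)^2$ in place, your chain of inclusions no longer closes: $\stt(p)\subseteq pp\inv$ only gives $\stt(p)^2\subseteq(pp\inv)^2$, and the reverse containment $(pp\inv)^2\subseteq\stab(p)$ --- that arbitrary products $ab\inv cd\inv$ of realizations of $p$ stabilize $p$ --- is a further amalgamation argument (factoring $ab\inv$ through a realization of $p$ weakly random over everything) that your sketch never addresses. None of this is fatal to the strategy, but as written the proposal proves a statement (that $\stt(p)$ itself is the connected component) which is strictly stronger than, and in general false compared with, the Fact being cited.
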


If the definably amenable pair we consider happens to be as in the 
first case of  Example \ref{E:amenable} or a stable group as in Example \ref{E:fsg}, our notation coincides with the classical notation $\GO M$.

\section{On $3$-amalgamation and solutions of $xy=z$}\label{S:pqr}

As in Section \ref{S:Fubini}, we fix a definably amenable pair $(G, 
X)$ satisfying Assumption \ref{H:sigmafte} and \ref{H:Fubini}. All throughout this section, we work over some fixed elementary  substructure $M$. We denote by $\supp_M(\mu)$ the 
\emph{support} of $\mu$, that is, the collection of all weakly random types over $M$ 
contained in $\sbgp X$.  

Note that each coset of the subgroup $\Conn M$ of Definition \ref{D:G00} is type-definable over $M$ and 
hence $M$-invariant, though it 
need not have a representative in $M$. Thus, every type $p$ over 
$M$ 
contained in $\sbgp X$ must determine a coset of $\Conn M$. We 
denote by $\Cos p$ the 
coset of $\Conn M$ of $\sbgp X$ containing some (and hence every)
realization of $p$. The following result resonates with \cite[Corollary 1]{TW19} and \cite[Theorem 1.3]{CPT20} beyond the definable context. 
 
\begin{prop}\label{P:stable_invariant}
Consider an $M$-invariant subset $S$ of $\sbgp X$ such that the relation $u\cdot v \in S$ is stable, as in Definition \ref{D:stable}. The set $S$ must be, up to $M$-definable sets of measure $0$, a union of cosets of $\Conn M$, that is, if an element $g$ in $\sbgp X$ belongs to $S$ with $q=\tp(g/M)$ in $\supp_M(\mu)$, then every  element $h$ in $\Cos{q}$ weakly random over $M$ belongs to $S$ as well. 
\end{prop}
Our proof is mostly an adaptation of \cite[Proposition 2.2]{PSW98}. Whilst the authors used the independence theorem from simple theories, we will use the stability of the $M$-invariant relation $S$  instead.
\begin{proof}
Assume that the element $g$ as above belongs to the stable $M$-invariant relation $S$. Let $h$ be in $\Cos{\tp(g/M)}$ weakly random over $M$ and choose a 
realization $b$ of $\tp(h/M)$ weakly random over $M, g$.  Now, the elements $g$ and $b$ both lie in the same coset of $\Conn M$, so  
the difference $b\cdot g\inv$ lies in $\Conn M$ and is weakly 
random over $M, g$. Since 
weakly random types do not fork, the type $\tp(b\cdot g\inv /M,g)$ 
does not fork 
over $M$. 

Fact \ref{F:Hr} yields that the partial type 
$\tp(g/M)\cap (b \cdot g\inv)\cdot \tp(g/M)$ is  weakly random. Choose therefore some element $g_1$ realizing 
$\tp(g/M)$ 
weakly random over $M, g, b$ such that $b\cdot g\inv \cdot 
g_1\equiv_M g$. By invariance of $S$, 
we have that $b\cdot g\inv\cdot g_1$ belongs to $S$ as well.  

Summarizing, the $M$-invariant relation $\bar S=\left\{ (u,v) \in \sbgp X\times \sbgp X \ | \ u\cdot v\in S\right\}
$ holds for 
the pair $(b\cdot g\inv, 
g_1)$ with  
$\tp(g_1/M, b\cdot g\inv)$ weakly random and hence non-forking 
over $M$. Since the above 
relation is stable, for  any pair $(w,z)$ such that \[ w\equiv_M 
b\cdot g\inv\ , \   
z\equiv_M  
g_1 \ \text{ and }  \ \tp(w/M,z) \ \text{ non-forking over } M,\]   the relation  
$\bar S$ must also hold. Setting now $w=b\cdot g\inv$ and $z=g$, 
we 
deduce that $ b = b\cdot g\inv \cdot g$ belongs to  $S$. As the 
element $h$ realizes $\tp(b/M)$, we conclude by $M$-invariance  
that $h$ belongs to $S$, as desired.
\end{proof}
Given now two $M$-definable subsets $A$ and $B$,  the relation \[ R^\alpha_{A,B}(u, v) \ \Leftrightarrow \ ``\mu( u A \cap 
v B ) =\alpha \text{''} \] 
is stable by Fact \ref{F:stable_measure0}. So, setting $S=\{g\in \sbgp X \ | \ \mu( A \cap 
g B ) =\alpha\}$, Proposition 
\ref{P:stable_invariant} yields immediately the following result, which we personally think it resonates with Croot-Sisask's almost-periodicity \cite[Corollary 1.2]{CS10}.

\begin{cor}\label{C:measure_constant_coset}
	Given two $M$-definable subsets $A$ and $B$, 
	the values $\mu(A\cap gB)$ and $\mu(A\cap hB)$ agree for any 
	two weakly random 
	elements $g$ and $h$ over $M$ within the same coset of $\Conn 
	M$. \qed
\end{cor}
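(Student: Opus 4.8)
The plan is to reduce the statement to an instance of the stability phenomenon from Proposition \ref{P:stable_measure0}, exactly in the spirit of the proof of Theorem \ref{T:pqr}. First I would fix a rational $\alpha$ and consider the $M$-invariant relation $R^\alpha(g,h)$ which holds precisely when $\mu(A\cap gB\cap hB) = \alpha$ (or a variant thereof); more directly, I want to compare $\mu(A\cap gB)$ for the two elements, so I would work with the relation $R^\alpha_{\varphi,\psi}(g,h)$ where $\varphi(g,z)$ defines $A\cap gB$ as a subset of the appropriate Cartesian power and $\psi$ is chosen so that the conjunction records the relevant intersection. The key point is that by Proposition \ref{P:stable_measure0} any such relation ``$\mu(\varphi(g,z)\wedge\psi(h,z)) = \alpha$'' is stable, and stable relations are determined on non-forking pairs via Remark \ref{R:stable_sym}.

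The main work is to produce, from two weakly random elements $g$ and $h$ in the \emph{same} coset of $\Conn M$, a weakly random element $g'$ with $g'\equiv_M g$ and $\tp(g'/M,h)$ non-forking over $M$, together with $\mu(A\cap g'B)=\mu(A\cap gB)$. For this I would use Fact \ref{F:Hr}: since $g$ and $h$ lie in the same coset, the element $gh\inv$ (or $g\inv h$) lies in $\Conn M = \sbgp X_M^{00}$ and is weakly random over $M$, hence over a suitable base. Letting $p$ denote a weakly random completion of $\tp(g/M)$, Fact \ref{F:Hr} gives that for a weakly random element $a$ of $\Conn M$ over $M$ the type $p\cap a\cdot p$ is weakly random; choosing $a$ to be (a conjugate/translate of) $gh\inv$ weakly random over the relevant parameters produces a realization $g'$ of $\tp(g/M)$ with $h g'\inv$, equivalently $g'$, related to $h$ by a non-forking condition — precisely as in Claim \ref{C:pqr3}. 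By invariance of the measure $\mu$ under automorphisms, $\mu(A\cap g'B) = \mu(A\cap gB)$, since $g'\equiv_M g$ and $A, B$ are $M$-definable.

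Once such a $g'$ is in hand, I would invoke Remark \ref{R:stable_sym}: the stable relation ``$\mu(A\cap uB)\ge \mu(A\cap g'B)$ and $\le$'' (more carefully, the relation $R^\alpha_{\varphi,\psi}$ pinning the value to $\alpha := \mu(A\cap g'B)$), which holds for the non-forking pair $(g',h)$ in the appropriate order, must then hold for the pair $(g,h)$, giving $\mu(A\cap gB) = \alpha = \mu(A\cap hB)$ after unwinding. The symmetric argument (swapping the roles of $g$ and $h$) closes the loop; alternatively one observes $\alpha$ depends only on the coset since $\tp(g/M)$ and $\tp(h/M)$ can both be connected to a single ``reference'' realization by non-forking bridges of this kind. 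The expected obstacle is purely bookkeeping: arranging the parameters and the precise form of $\varphi,\psi$ so that the stable relation of Proposition \ref{P:stable_measure0} literally captures the equality $\mu(A\cap gB)=\mu(A\cap hB)$, and checking that the non-forking transfer via Remark \ref{R:stable_sym} applies on the correct side — there is nothing deeper than what already appears in the proof of Theorem \ref{T:pqr}, of which this corollary is essentially a corollary.
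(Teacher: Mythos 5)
There is a genuine gap, and it is not mere bookkeeping. To apply Proposition \ref{P:stable_measure0} you need a genuinely \emph{binary} stable relation of the stated form $\mu(\varphi(u,z)\land\psi(v,z))=\alpha$ whose value computes $\mu(A\cap gB)$, and comparing $g$ and $h$ directly does not produce one: the condition ``$\mu(A\cap uB)=\alpha$'' is unary (it corresponds to taking $\psi$ trivial), so it is just an $\equiv_M$-invariant property of $\tp(u/M)$ and the non-forking transfer of Remark \ref{R:stable_sym} adds nothing beyond invariance, while your variant $\mu(A\cap uB\cap vB)=\alpha$ computes a different quantity altogether. The step you are missing is a factorization through Theorem \ref{T:pqr}: fix a weakly random type $p$ in $\Conn M$ and set $r=\tp(g/M)$; since $C_M(p)\cdot C_M(r)=C_M(r)=C_M(\tp(h/M))$, Theorem \ref{T:pqr} writes $g=cd$ and $h=c_1d_1$ with $c,c_1\models p$, $d,d_1\models r$ and $\tp(c/M,d)$, $\tp(c_1/M,d_1)$ weakly random, hence non-forking over $M$. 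Then $\mu(A\cap gB)=\mu(c\inv A\cap dB)$, and the relation $R^\alpha(u,v)\Leftrightarrow\mu(u\inv A\cap vB)=\alpha$ is exactly of the form covered by Proposition \ref{P:stable_measure0}; its truth on non-forking pairs depends only on the types of the two coordinates over $M$, which transfers the value from $(c,d)$ to $(c_1,d_1)$ and yields $\mu(A\cap hB)=\alpha$. Without decomposing $g$ and $h$ as products, there is no place for the stability to act.

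Two further steps in your argument fail as written. First, $gh\inv$ need not be weakly random over $M$: the hypothesis is only that $g$ and $h$ are each weakly random over $M$, with no independence between them (take $h=g$, so that $gh\inv$ is the identity), so the application of Fact \ref{F:Hr} to $gh\inv$ is unjustified. Second, even granting a $g'\equiv_M g$ with $\tp(g'/M,h)$ non-forking, your final transfer from the pair $(g',h)$ to the pair $(g,h)$ via Remark \ref{R:stable_sym} requires that $\tp(g/M,h)$ (or $\tp(h/M,g)$) be non-forking over $M$, which is not given and is false in general for the arbitrary weakly random $g,h$ of the statement.
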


Given now two types $p_1$ and $p_2$ over $M$ and an element $g$ of $\sbgp X$ such that the partial type $p_1\cdot g \cap p_2$ is consistent, it follows that the type $\tp(g/M)$ determines the coset $\Cos{p_1}\inv\cdot \Cos{p_2}$, so $\Cos{p_1} \cdot \Cos{\tp(g/M)} = \Cos{p_2}$. The following result can be seen as a sort of converse. Notice that \[S=\{g \in \sbgp X \ | \ p_1\cdot g \cap p_2 \text{ is weakly random over }M \}\]
is $M$-invariant and $u\cdot v\in S$ is stable, by Fact
\ref{F:stable_measure0}. 

\begin{cor}\label{C:weakly_random_everywhere_cos}
Let $p, q$ and $r$ be three coset-compatible types in $\supp_M(\mu)$, 
that is,\[ \Cos p \cdot \Cos q = \Cos r.\] If $p\cdot g\cap r$ is weakly 
random for some element $g$ in $\sbgp X$ with $\tp(g/M)$ in 
$\supp_M(\mu)$, then so is $p\cdot h \cap r$ for every weakly 
random element $h$ whose type over $M$ is concentrated in $\Cos{q}$. \qed
\end{cor}

The above result was first observed for principal generic types in a simple 
theory in \cite[Proposition 2.2]{PSW98} and later generalized to 
non-principal types in \cite[Lemma 2.3]{MPP04}. For  weakly random types with respect to a  pseudo-finite Keisler  measure, a preliminary version was obtained by the second author \cite[Proposition 3.2]{dP20} for 
ultra-quasirandom groups.

For the rest of this section, we will assume that $M$ is countable. Fix some $k$ in $\N$ and consider $Y=(X^{\odot k})$. The value $k$ should be chosen large enough to ensure that all the products and inverses of elements in the subsequent still belong $Y$. By an abuse of language, we will use the word \emph{random} to mean a random type with respect to the corresponding class $\BB$ as in Definitions \ref{D:BooleanRandom} and \ref{D:random}.   

\begin{remark}\label{R:random_dense}
It follows immediately from Remark \ref{R:posdens_random} that the Borel set of random types over $M$ is dense in the compact Hausdorff space of weakly randoms concentrated on $Y$, that is, the space $[Y]\cap \supp_M(\mu)$, where $[Y]$ is the clopen set given by the $M$-definable set $Y$. We denote by $\mathrm{R}(\mathcal B^Y_M)$ the collection of random types over $M$ concentrated on $Y$.
\end{remark}

\begin{lemma}\label{L:inter_posdens}
Given $M$-definable subsets $A$ and $B$ of $Y$ of 
positive 
density,  there exists some 
random element $g$ over $M$  with $\mu(Ag\cap  B)>0$.  
\end{lemma}
\begin{proof}
By Remark \ref{R:posdens_random}, let $c$ be random in $B$ 
over $M$ and choose now $g\inv$ in $c\inv A$ random 
over 
$M, c$. The element $g$ is also random over $M,c$. By symmetry of 
randomness, the pair $(c,g)$ is random over $M$, so $c$ is random over $M, g$.  
Clearly the element $c$ lies in $Ag\cap B$, so the set $Ag\cap B$ has positive 
density, as desired. 
\end{proof}
\begin{remark}\label{R:inter_posdens}
Notice that the above results yields the existence of an element $h$ random 
over $M$ such that $hA\cap  B$, and thus $A\cap h\inv B$, has positive density: 
Indeed, apply the 
statement to the definable subsets $B\inv$ and $A\inv$. 
\end{remark}

For any two fixed types $p$ and $r$ in $\supp_M(\mu)$, the statement 
\[  ``p\cdot y \cap r \text{ is weakly random } \& \text{ $y$ is 
weakly random''},\] as a property of $y$ is finitely consistent: Indeed, 
given finitely many $M$-definable subsets $A_1, \ldots, A_n$ in $p$ 
and $B_1, \ldots, B_n$ in 
$r$, the $M$-definable subsets $A=\bigcap_{1\le i\le n}A_i$ and 
$B=\bigcap_{1\le i\le n}B_i$ lie in $p$ and $r$ respectively, so  
they both have positive density. By Lemma \ref{L:inter_posdens}, there exists a random element $g$ in $\sbgp X$ over $M$ with $A_ig\cap B_j$ of 
positive density for all $1\le i, j\le n$. 

However, the condition ``$p\cdot y \cap r$  is weakly random'' is a 
$G_\delta$-condition on $y$, namely 
\[
\bigcap_{\substack{ A\in p\\ B\in r}}\left\{ y \in A\inv B \ | \ \mu(A\cdot y\cap B)>0\right\}.
\] 
Thus, we cannot use compactness to 
deduce from the above argument that we fulfill the conditions of 
Corollary \ref{C:weakly_random_everywhere_cos} for all weakly 
random types $p, q$ and $r$. We are grateful to Angus Matthews for 
pointing out a mistake in a previous version of this paper. 

To circumvent the aforementioned issue, we shall use the so-called 
\emph{disintegration theorem} which will allow us to fulfill the conditions of 
Corollary \ref{C:weakly_random_everywhere_cos} for almost all 
pairs of types $p$ and $r$. Whilst there are plenty of excellent 
references on this subject worth being named, we just refer to 
\cite{vB07, dS12}. 

\begin{remark}\label{R:logic_top}
Given $n$ in $\N$ consider a set $\Omega$ and a surjective 
map $ F:S_{Y^n}(M)\to \Omega$ such that the set $\{ (p, q) \in 
S_{Y^n}(M)\times S_{Y^n}(M) \ | \ F(p)=F(q)\}$ is closed. For 
example, consider a type-definable equivalence relation $E(x,y)$ on  
$Y^n\times Y^n$ with parameters over $M$ and set $p\sim q$ if 
and only if \[ p(x)\cup q(y)\cup E(x,y) \text{ is consistent}.\] The 
relation $\sim$  is a closed equivalence relation on  $S_{Y^n}(M)$, 
so set $\Omega$ to be the collection of $\sim$-equivalence classes and $F$ 
the natural projection map. 

We can now equip $\Omega$ with the final topology with respect to $F$, so a subset $C$ of $\Omega$ is closed if and only if $F\inv(C)$ is closed in the topological space $S_{Y^n}(M)$. It is immediate to see that $\Omega$ with this topology becomes a compact Hausdorff separable space. 
Furthermore,  we can define a measure on $\Omega$, the  
\emph{push-forward measure} $F_*\mu$,  given by 
$F_*\mu(B)=\mu(F\inv(B))$ for every Borel subset $B$ of 
$\Omega$. 
\end{remark}
\begin{fact}\textup{(}Disintegration theorem\textup{)}\label{F:disint}
Consider the normalized measure $\mu_{Y^n}$ on the space of types 
$S_{Y^n}(M)$, so it becomes a probability space. Given 
$F:S_{Y^n}(M)\to \Omega$ as in Remark \ref{R:logic_top}, there 
exists a \emph{disintegration} of $\mu_{Y^n}$ by a (uniquely 
determined) 
family of Radon conditional probability measures on $S_{Y^n}(M)$ 
with respect to the continuous function $F:S_{Y^n}(M)\to \Omega$, 
{\it i.e.} there exists a mapping \[ (Z, t)\mapsto \nu(Z, 
t)=\mu_t(Z),\] 
where 
$Z$ is a Borel set of $S_{Y^n}(M)$ and $t$ is an element of 
$\Omega$, with the following properties:
\begin{enumerate}[(a)]
	\item\label{I:reg} for  all $t$ in  $\Omega$, the measure 
	$\mu_{t}$ is a Borel inner regular probability  measure on 
	$S_{Y^n}(M)$;
	\item\label{I:meas} for every measurable subset $Z$ of $S_{Y^n}(M)$, the function $t\mapsto \mu_{t}(Z)$ is measurable with respect to the measure $F_*\mu_{Y^n}$;
 \item\label{I:concent} each measure $\mu_t$ is concentrated on the 
 fiber $F\inv (t)$, that is, the measure $\mu_t( 
 S_{Y^n}(M)\setminus F\inv(t))=0$, so $\mu_t(Z)= \mu_t(Z\cap 
 F\inv(t))$ for every Borel subset $Z$ of $S_{Y^n}(M)$; 
	\item\label{I:fonct} for every measurable function 
	$f:S_{Y^n}(M)\to \mathbb R$, we have that  
 \[
\int_{S_{Y^n}(M)} f \ d\mu_{Y^n} = \int_{t\in\Omega} \int_{F\inv(t)} f \ d\mu_t \, dF_*\mu_{Y^n}.  
\] 
In particular, setting $f$ the characteristic function $\mathbbm{1}_Z$ of the measurable 
subset 
$Z$ of $S_{Y\times Y}(M)$, we have that 
 \[
\mu_{Y^n}(Z) = \int_{t\in\Omega} \mu_t(Z) \, dF_*\mu_{Y^n}. 
 \]
\end{enumerate}
\end{fact}

\begin{lemma}\label{L:exist_random}
Consider the natural restriction map \[ 
\begin{array}{rccl}

 \pi:	& S_{Y^2}(M)  &\to &   S_Y(M) \times S_Y(M)  \\[1mm]	 & 	q(y_1,y_2) &\mapsto  & (q\restr {y_1}(y_1), q\restr {y_2}(y_2))  \end{array}.
 \] 
Every pair of types $(p, r)$ of $S_Y(M) \times S_Y(M)$ outside of  a $\pi_*\mu_{Y^2}$-measure $0$ set can be completed to a random type of $S_{Y^2}(M)$.
\end{lemma}
\begin{proof}
Let $\mathrm{R}(\mathcal B_M^{Y^2})$ be the Borel  set of random 
types on $S_{Y^2}(M)$. It follows from Remark 
\ref{R:posdens_random} that $\mu_{Y^2}(\mathrm{R}(\mathcal 
B_M^{Y^2}))=1$. Apply now  the 
disintegration theorem (Fact \ref{F:disint}) with $\Omega=S_Y(M) 
\times S_Y(M)$ and $F=\pi$, and deduce from 
\[
1=\mu_{Y^2} ( \mathrm{R}(\mathcal B_M^{Y^2}) ) = \int_{(p,r)\in 
S_Y(M)\times S_Y(M)} \mu_{(p,r)} ( \mathrm{R}(\mathcal 
B_M^{Y^2})) \, d\pi_*\mu_{Y^2} 
\] that $\mu_{(p,r)} ( \mathrm{R}(\mathcal B_M^{Y^2})) =1$ 
for $\pi_*\mu_{Y^2}$-almost all pairs $(p,r)$,  since each function 
$\mu_{(p,r)}$ takes values in the interval $[0,1]$. In  particular, the 
set $\pi\inv(p,r)\cap \mathrm{R}(\mathcal B_M^{Y^2})$ 
is non-empty by 
Fact \ref{F:disint} (\ref{I:concent}). Every such completion yields a 
random pair $(a, b)$ 
over $M$, with $a$ realizing $p$ and $b$ realizing $r$, as desired. 
\end{proof}
\begin{theorem}\label{T:pqr}
For every pair of types $(p, r)$ of $S_Y(M) \times S_Y(M)$ outside of  a $\pi_*\mu_{Y^2}$-measure $0$ set and every weakly random type $q=\tp(b/M)$ concentrated on $Y$ with $\Cos p\cdot  \Cos q= \Cos r$, 
 there is a realization $a$ of $p$  weakly random over $M,b$ such that $a \cdot b$ realizes $r$.
\end{theorem}
\begin{proof}
By Lemma \ref{L:exist_random}, for every 
pair 
$(p, r)$ of $S_Y(M) \times S_Y(M)$ outside of  a 
$\pi_*\mu_{Y^2}$-measure $0$ set there exists  a random pair $(c, d)$ 
over $M$, with $c$ realizing $p$ and $d$ realizing $r$. By Remark  
\ref{R:random_wide} and Lemma 
\ref{L:sym_random}, the pair  $(c\inv\cdot 
d, d)$ is random over $M$, so the partial type $p\cdot (c\inv\cdot 
d)\cap r$ admits a random realization, and thus it is  weakly 
random. The element $c\inv\cdot d$ is (weakly) random over $M$ 
and belongs to $\Cos q$, since $p$, $q$ and $r$ are coset-compatible. 
We can thus apply Corollary 
\ref{C:weakly_random_everywhere_cos} to deduce that $p\cdot b 
\cap r$ is weakly random.  Choose  some realization $f$ of this 
partial 
type weakly random over $M,b$ and notice that the element 
$a=f\cdot b\inv$ is weakly random over $M, b$ and realizes $p$. By 
construction, the product $a\cdot b=f$ realizes $r$, as desired.
\end{proof}

Whilst Theorem \ref{T:pqr} holds for almost all types $(p,r)$, the corresponding $\pi_*\mu_{Y^2}$-measure $0$ set could possibly contain all diagonal pairs $(p,p)$, with $p$ in $\supp_M(\mu)$. We will conclude this section with an elementary observation, the consequences of which will be explored in detail in Section \ref{S:ultra}. 

\begin{remark}
Fix a countable elementary substructure $M$. If there exist a 
random pair $(a, b)$ over $M$ with $a\equiv_M b$, then there 
exists a random type concentrated in $\Conn M$.  Indeed, the 
element $b\inv \cdot a$ is random over $M$ by Remark 
\ref{R:random_wide} and Lemma \ref{L:sym_random}. Clearly, the 
element $g=b\inv\cdot a$ lies in $\Conn M$, as desired. 
\end{remark}

\begin{question}
Is there a a random pair $(a, b)$ over $M$ with $a\equiv_M b$? More generally, 
is there a random type concentrated in $\Conn M$?
\end{question}

\section*{A digression: Roth's theorem on arithmetic progressions}

We will now show how Corollary \ref{C:measure_constant_coset} yields 
 solutions to the equation $x\cdot z = y^2$ in subsets of positive density 
 for every definably amenable pair  such that the squaring function $x\mapsto x^2$ preserves randomness.
 \begin{definition}\label{D:preserve_random}
The function $f:X \to G$ in the definably amenable pair $(G,X)$ \emph{preserves randomness} if for every element $a$ in $X$ and every subset $C$ of parameters,  we have that $a$ is (weakly) random over  C if and only if $f(a)$ is (weakly) random over $C$ (so $f(a)$ must lie in $\sbgp X$).
\end{definition}

 \begin{remark}\label{R:preserv_weak_random}
    The examples \ref{E:amenable},  \ref{E:Foelner} and \ref{E:fsg} 
    always have the property that the square function preserves 
    randomness if the map $f:X\to G$ defined by $f(x)=x^2$ has 
    finite fibers. This is always the case whenever $X$ has distinct squares as in \cite[Theorem 1.5]{tS17} or if $G$ is abelian and 
    there are only finitely many involutions in $\sbgp X$.   
\end{remark}

\begin{theorem}\label{T:Roth_defamen}
    Consider a definably amenable pair $(G,X)$ such that the square function preserves randomness. If the definable subset $A$ of $X$ has positive density, then the set 
     \[ 
  \left\{ (x_1, x_2) \in A\times A \ | \ x_1\cdot x_2  \in A^2 \right\}
     \] has positive $\mu_2$-density, where $A^2=\{x^2\}_{x\in A}$.
\end{theorem}
Assume $A$ is definable over the countable elementary substructure $M$. Every pair $(a, c)$ in the above set random over $M$ gives raise to a 
\emph{generalised} $3$-AP in 
$A$. Indeed, the product $a\cdot c$ belongs to $A^2$ so $a\cdot 
c=b^2$ for some $b$ in $A$. Since the square function preserves 
randomness, we have that $b$ is random over 
$M, a$ by Lemma \ref{L:sym_random}. Set now $g=b\inv \cdot a= b\cdot c\inv$ and observe that the 
elements $c, 
g\cdot c$ and $g\cdot c\cdot g$  all belong to $A$. If the group is 
abelian, this is an actual $3$-AP as in the introduction. 
\begin{proof}
We may assume that $A$ is definable over a countable elementary substructure $M$, so it contains a weakly random type $p$ over $M$. Choose some weakly random element $g$ in $\Conn M$. By Fact 	\ref{F:Hr}, the partial type  $p\cdot g \cap p$ is weakly random, so 
	the set $A\cdot g\cap A$ has positive density. By Remark 
	\ref{R:posdens_random}, choose an element $a$ in $A\cdot g \cap 
	A$ random over $M,g$ and notice that $b=a\cdot g\inv$ lies in $A$ as well.  
 
 Since squaring preserves randomness, the element $a^2$ is also random over $M, g$ and hence so is $a\cdot b=a^2\cdot g\inv$ by Remark \ref{R:random_wide}. By Lemma \ref{L:ref}, the element $g$ is weakly random over $M, a\cdot b$, and hence $a^2=(a\cdot b)\cdot g$ is weakly random over $M,a\cdot b$. We deduce that $a$ is weakly random over $M,a\cdot b$, for squaring preserves randomness. Furthermore, multiplying on left by $(a\cdot b)\inv$ we conclude that $b^{-1}$, and hence $b$, is weakly random over $M,a\cdot b$.

Note that $b$ belongs to $A\inv \cdot (a\cdot b) \cap A $, so this intersection must have positive density. Corollary  \ref{C:measure_constant_coset} yields that the set $A\inv 
 \cdot a^2 \cap A$ has positive measure, for $a^2$ and $a\cdot b$ lie in the same coset modulo $\Conn M$.  Choose now some  some random 
 element $a_1$ in $A$ over $M, a$ with $a_1\inv\cdot a^2=a_2$ in 
 $A$. Remark 
 \ref{R:random_wide} and  Lemma \ref{L:sym_random} yield that the pair 
 $(a_1,a_2)$ is 
 random over $M$. Thus, the $M$-definable set 
\[ 
  \left\{ (x_1, x_2) \in A\times A \ | \ x_1\cdot x_2  \in A^2 \right\}
\]  
has positive $\mu_2$-measure, as desired. 
\end{proof}

\begin{question}
Consider a definably amenable pair $(G,X)$ such that the square 
function preserves randomness and let $M$ be a
countable elementary substructure $M$. Given an $M$-definable 
subset $A$ of $\sbgp X$ of positive density, does the $M$-definable 
set 
 \[ 
\left\{ (x_1, x_2) \in A\times A \ | \ x_2\cdot \ x_1\inv\cdot x_2  \in 
A \right\}
\] has positive $\mu_2$-density? Equivalently, is there a random 
pair $(a, b)$ in $A\times A$  over $M$ with $b\cdot a\inv\cdot b$ in 
$A$? 
\end{question}
Such a pair $(a, b)$ as above yields a $3$-AP in $A$ of the form $(a, 
a\cdot g, a\cdot g^2)$ with $g=a\inv b$. We do not currently know 
whether the above question has a positive answer, though it is the 
case for ultra-quasirandom groups, by the work of Tao \cite{tT13}. 

\begin{remark}\label{R:Eq_Roth_abelian}
The proof of Theorem \ref{T:Roth_defamen} in the abelian context yields immediately the existence of solutions to translation-invariant equations of the form \[ n_1 x_1+ \cdots n_m x_m= ky,\] whenever $k=\sum_{j=1}^m n_j$ and each of the maps $x\mapsto n_1 x$, $x\mapsto kx$ and $x\mapsto n'x$ preserves randomness, with $n'=\sum_{j=2}^m n_j$. That is, for every $M$-definable subset $A$ of $X$ of  positive density,  the set  \[ \mathcal{E}(A)= \left\{ (x_1, \cdots, x_m) \in A\times \stackrel{m}{\cdots}\times A \ | \ n_1 x_1+ \cdots n_m x_m= k c \text{ for some 
 c in }A \right\} \] has positive $\mu_m$-measure.  Indeed, choose $g$, $a$ and $b$ as in the proof of Theorem \ref{T:Roth_defamen}, so $g=a-b$. If we denote by $\ell A=\{\ell d\}_{d\in A}$, we will first show that  the set $n_1A + n'a\cap kA$ has positive density: By Corollary  \ref{C:measure_constant_coset}, we need only show that $n_1A + ka-n_1b \cap kA$ has positive density. Now, the element \[ ka-n_1b= n'a+ n_1g\] is random over $M, g$, since $a$ is random over $M, g$. So $g$, and thus $kg$, is weakly random over $M, ka-n_1b$ by Lemma \ref{L:ref}. Since \[ kg= ka- kb= (ka-n_1b)-n'b,\] we deduce that $-n'b$, and hence $n_1b$, is weakly random over $M, ka-n_1b$. Hence, the element $ka= n_1b + (ka-n_1b)$ is  weakly random over $M, ka-n_1b$ and belongs to $n_1A + ka-n_1b \cap kA$, as desired. 

 Choose now $a_2,\ldots, a_m$ realisations of $\tp(a/M)$ with $a_j$ weakly random over $M, a, g, a_2, \ldots, a_{j-1}$.  Hence, the differences $a_j-a$ all belong to $\Conn M$, by Fact \ref{F:Hr}. Corollary  \ref{C:measure_constant_coset} and the above paragraph yield that $n_1 A + \sum_{j=2}^m n_j a_j\cap k A$ has positive density, so choose an element $a_1$ in $A$  weakly random over $M, a_2,\ldots, a_m$ exemplifying that  the above intersection has positive density. The weakly random type $\tp(a_1,\ldots, a_m/M)$ contains the $M$-definable set  
$\mathcal{E}(A)$, as desired.  
\end{remark}

\section{Ultra-quasirandomness revisited}\label{S:ultra}

Given a definably amenable pair $(G, X)$ with $\sbgp X=G$, a  straight-forward application of compactness yields that  $X^{\odot n}=G$ for some natural number $n$ in $\N$, so $X$ generates $G$ in finitely many steps. Up to scaling the $\sigma$-finite measure, we may assume that $G=X$, so $\mu(G)=1$. This observation, together with  Examples \ref{E:amenable}(a) and \ref{E:fsg}, motivates the following notion. 

\begin{definition}\label{D:genPpal}
Let $(G, X)$ be a definably amenable pair with $X=G$. We say that the pair is {\em generically principal} if $G=\GO M$ for some elementary substructure $M$. 
\end{definition}
\noindent In an abuse of notation, we will simply say that the group $G$ is generically principal.

\begin{remark}\label{R:genppal_randomG00}
By \cite[Corollary 2.6]{MPP20}, a group $G$ is generically principal if and only if $G=\GO M$ for every elementary substructure $M$, so we may assume that $M$ is countable. 

In particular, a generically principal group contains trivially random 
elements concentrated in $\Conn M = G$ for every countable elementary 
substructure $M$.
\end{remark}

\begin{example}\label{E:genPPal_ex}
Three known classes of groups are generically principal: 
\begin{itemize}
\item Connected stable groups, such as every connected algebraic group over an algebraically closed field. 
    \item Simple definably compact groups definable in some o-minimal expansion of a real closed field, such as $\mathrm{PSL}_n(\mathcal R)$. 
    \item Ultra-quasirandom groups, introduced by Bergelson and Tao \cite{BT14}.
Let us briefly recall this notion. A finite group is \emph{$d$-quasirandom}, with $d\ge 1$, if all its non-trivial representations have degree at least $d$. An ultraproduct of finite groups $(G_n)_{n\in \mathbb N}$ with respect to a non-principal ultrafilter $\mathcal U$ is said to be {\em ultra-quasirandom} if for every integer $d\ge 1$, the set $\left\{ n\in \mathbb N \, | \, G_n \text{ is $d$-quasirandom} \right\}$
belongs to $\mathcal U$. 

The work of Gowers \cite[Theorem 3.3]{wG08} yields that every 
	definable subset $A$ of positive density of an ultra-quasirandom group $G(M)$  is not 
	{\em product-free}, 
	{\it i.e.} it contains a solution to the equation $xy=z$, and thus the 
	same holds in every elementary extension. Therefore, no weakly random type over an elementary substructure is product-free and thus $G=\GO N$ over any elementary 
	substructure $N$ by \cite[Corollary 2.6]{MPP20}, so ultra-quasirandom groups are generically principal. \end{itemize} 
\end{example}
Proposition \ref{P:stable_invariant} and its corollaries yield now a short proof of the result mentioned in the above paragraph. 

\begin{lemma}\label{L:genPPla_equiv}
 The following conditions are equivalent for a definably amenable pair $(G, G)$:
 \begin{enumerate}[(a)]
     \item The group $G$ is generically principal.
     \item Given two definable subsets $A$ and  $B$ of positive density, we have that $A\cdot B$ has measure $1$. In particular, whenever the definable subset $C$ has positive measure, so is $G=A\cdot B\cdot C$. 
     \item There is no definable product-free set of positive density.
 \end{enumerate}
\end{lemma}
\begin{proof}
For (a) $\Rightarrow$ (b):  Given  two subsets $A$ and $B$ of 
positive density  definable over 	some countable elementary 
substructure $M$,  we need only show that every weakly random 
every 	element $g$ lies in $	A\cdot 	B$. Now, 
Lemma \ref{L:inter_posdens} yields that there exists some random 
element $h$ over $M$ with $\mu(A\cap hB\inv)>0$. Corollary 
\ref{C:measure_constant_coset} gives that every element $g$ of $G$ 
weakly 
random over $M$ satisfies that $\mu(A\cap gB\inv)>0$ as well. So 
the definable set $A\cdot B$ has measure $1$, as desired. 

For the second assertion, given a definable set $C$ of positive density, let $g$ in $G$ be arbitrary. Now, 
\[
\mu(A\cdot B \cap g C\inv)= \mu(gC\inv) = \mu(C) >0,
\]
 so $g$ belongs to $A\cdot B\cdot C$, as desired. 
	
The implication (b) $\Rightarrow$ (c) is clear, taking $A$ and $B$ to be the same set. Thus, we are left to consider the implication (c) $\Rightarrow$ (a). Suppose that $G\ne \GO{M}$ for some countable elementary substructure $M$ and take a weakly random type $p$ in a non-trivial coset $\Cos{p}$  of $\GO{M}$. Note that $p\inv\cdot p \cdot p \subseteq \Cos{p}$. A standard compactness argument yields the existence of some $M$-definable set $A$ in $p$ such that $\mathrm{id}_G$ does not lie in $A\inv\cdot A \cdot A$, so $A$ is product-free. Since $p$ is weakly random, the definable subset $A$ has positive density. 
\end{proof}

The following result on weak mixing, already present as is in the work of Tao 
and Bergelson, 
was implicit in the work of Gowers \cite{wG08}. It will play a crucial role to 
study some instances of complete amalgamation of
equations in a group. 
\begin{cor}\label{C:density_prod}\textup{(}\cf\ 
	\cite[Lemma 33]{BT14}\textup{)}
Let $G$ be a generically principal group. Given two definable subsets $A$ and $B$ of positive density, 
 \[ \mu(A\cap gB)=\mu(A)\mu(B)\]    
	 for $\mu$-almost all elements $g$. 
\end{cor}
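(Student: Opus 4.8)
The plan is to deduce this from Corollary \ref{C:measure_constant_coset} together with the averaging identity provided by the Fubini assumption, exploiting the fact that in an ultra-quasirandom group $G$ there is only \emph{one} coset of $\GO M$, namely $G$ itself. First I would fix subsets $A$ and $B$ of positive density, definable over a countable elementary substructure $M$, and set $f(g) = \mu(A \cap gB)$. By Corollary \ref{C:measure_constant_coset}, the function $f$ takes a single constant value $\alpha$ on all elements $g$ that are weakly random over $M$ and lie in the (unique) coset $\GO M = G$; since every weakly random $g$ over $M$ lies in this coset, $f$ is constant equal to $\alpha$ on all weakly random elements over $M$. By Remark \ref{R:posdens_nf} (or directly, since forking formulas have density $0$ by $\sigma$-finiteness and automorphism-invariance of $\mu$), the non-weakly-random elements form a set of measure $0$; more carefully, the set $\{g : f(g) \neq \alpha\}$ is a Borel set disjoint from the realizations of every weakly random type over $M$, and so it meets no weakly random type, hence by Remark \ref{R:posdens_random} it has measure $0$. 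Therefore $f(g) = \alpha$ for $\mu$-almost every $g$.

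It then remains to identify the constant $\alpha$ as $\mu(A)\mu(B)$. For this I would integrate over $g$: by the Fubini condition (Assumption \ref{H:Fubini}, extended to Borel sets of finite measure via Remark \ref{R:Fubini_Borel}, working inside a fixed $X^{\odot k} = G$ containing everything in the pseudo-finite case), we have
\[
\int_{G} \mu(A \cap gB)\, d\mu(g) = \mu_2\bigl(\{(x,g) : x \in A,\ x \in gB\}\bigr) = \int_{G} \mu\bigl(\{g : g \in x B\inv\}\bigr)\, d\mu(x),
\]
and for fixed $x \in A$ the inner set is $x B\inv$, which has measure $\mu(B\inv) = \mu(B)$ by left-invariance and inversion-invariance of $\mu$. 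Hence the double integral equals $\mu(A)\mu(B)$. On the other hand, since $f \equiv \alpha$ almost everywhere and $\mu(G) = 1$, the left-hand side equals $\alpha$. Thus $\alpha = \mu(A)\mu(B)$, which is exactly the asserted value.

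The main obstacle I anticipate is purely bookkeeping around measurability and the applicability of Fubini: one must check that the set $\{(x,g) : x \in A \cap gB\}$ is a definable (or at worst Borel, finite-measure) subset of $\sbgp X^2 = G^2$, so that Remark \ref{R:Fubini_Borel} applies, and that the function $g \mapsto \mu(A \cap gB)$ is genuinely measurable — but this is guaranteed by the definability of the measure (Definition \ref{D:meas_def}) since $A \cap gB$ is uniformly definable in the parameter $g$. The only genuinely substantive input is Corollary \ref{C:measure_constant_coset}, whose hypothesis ``$g$ and $h$ weakly random over $M$ within the same coset of $\GO M$'' becomes vacuous-to-verify here precisely because $G = \GO M$ in the ultra-quasirandom setting (Remark \ref{R:ultra}); without that fact one would only get that $f$ is constant on each coset, not globally. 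Finally, one should note the result is stated for sets definable over any parameters, not just over $M$: this is harmless since positive-density sets are definable over \emph{some} countable elementary substructure, and both sides of the claimed identity are invariant, so we may run the argument over that substructure.
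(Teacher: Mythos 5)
Your proposal is correct and follows essentially the same route as the paper: constancy of $\mu(A\cap gB)$ on weakly random elements via Corollary \ref{C:measure_constant_coset} together with $G=\GO{M_0}$, followed by an averaging computation to identify the constant as $\mu(A)\mu(B)$. The only cosmetic difference is that you evaluate the average by Fubini over all of $G$ (using inversion-invariance of the counting measure), whereas the paper restricts to $AB\inv$ and uses the identity $|X\times Y|=\sum_{x\in XY^{-1}}|X\cap xY|$; these are the same calculation.
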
	

\begin{proof}
As before, fix some countable elementary 
substructure $M$ such that both $A$ and $B$ are 
$M$-definable. We may assume that the measure $\mu$ is also definable 
over $M$. By Corollary \ref{C:measure_constant_coset}, set 
$\alpha=\mu(A\cap gB)$ for some (or equivalently, every) weakly 
random 
element $g$ over 
$M$.  Notice that $\alpha>0$ by Remark 
\ref{R:inter_posdens}. 
  
The subset \[ Z=\{x\in A\cdot B\inv \ |\ \mu(A\cap xB)=  
\alpha\} \] is type-definable over $M$ and contains all weakly 
random elements over $M$. Clearly, the measure $\mu(Z)\le \mu(AB\inv)$ and the latter equals $1$,  by Lemma  
\ref{L:genPPla_equiv}.   If  $\mu(Z)<\mu(A\cdot B\inv)$, there is an 
$M$-definable set $\tilde Z$ with $Z\subseteq \tilde Z \subseteq 
A\cdot B\inv$ such that $\mu(A\cdot B\inv \setminus \tilde Z)>0$. 
Thus, 
the 
set $A\cdot B\inv \setminus \tilde Z$ has positive density and it 
must 
contain a weakly random element over $M$, which gives the 
desired contradiction, so $\mu(Z)=\mu(A\cdot B\inv)=1$. 

Consider now the set
$ W=\left\{ (a, z) \in A\times A\cdot B\inv  \ | \  z=a\cdot b\inv 
\text{ for some $b$ in $B$}
\right\}$. Note $a$ belongs to $A\cap z\cdot B$ and $z$ lies in 
$aB\inv$ if $(a, z)$ belongs to $W$. If we denote by $\mu_2$ the 
normalized  measure in $G\times G$, an 
easy 
computation yields that 
\[
\mu_2(W) = \int_{z\in A\cdot B\inv} \mu(A\cap zB) = \alpha 
\mu(A\cdot B\inv)\stackrel{\ref{L:genPPla_equiv}}{=}\alpha.
\]
By Fubini, we also have that 
\[
\alpha=\mu_2(W) = \int_{a\in A} \mu(aB\inv) = \int_{a\in A} 
\mu(B) = \mu(A)\mu(B),
\]
which gives the desired  conclusion. 
\end{proof}

A standard translation using \L o\'s's theorem  yields the following finitary 
version:

\begin{cor}\textup{(}\cf\ 
	\cite[Lemma 5.1]{wG08} \& \cite[Proposition 3]{BT14}\textup{)}\label{C:ultra_finite}
For every positive $\delta$, $\epsilon$ and $\eta$ there is some integer 
$d=d(\delta, \epsilon, \eta)$ such that for every finite $d$-quasirandom group 
$G$ and subsets $A$ and $B$ of $G$ of density at least $\delta$, we have that 
\[\left| \left\{x \in G \ \big| \ |A\cap xB| |G| < (1-\eta) |A||B| \right\} 
\right|< 
\epsilon |G|.\]
\end{cor}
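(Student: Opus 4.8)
The plan is to deduce the finitary statement from Corollary \ref{C:density_prod} by a routine compactness/\L o\'s argument, which is the standard way to pass between the pseudo-finite ultraproduct picture and a uniform statement about finite $d$-quasirandom groups. First I would argue by contradiction: suppose the conclusion fails for some fixed $\delta,\epsilon,\eta>0$. Then for every integer $d$ there is a finite $d$-quasirandom group $G_d$, together with subsets $A_d,B_d\subseteq G_d$ of density at least $\delta$, such that the ``bad'' set
\[
E_d=\Bigl\{x\in G_d \ \Bigm|\ |A_d\cap xB_d|<(1-\eta)\tfrac{|A_d||B_d|}{|G_d|}\Bigr\}
\]
has size $|E_d|\ge \epsilon|G_d|$. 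Passing to a subsequence we may assume the $G_d$ witness arbitrarily large quasirandomness.

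Next I would take the ultraproduct of the structures $(G_d, A_d, B_d)$ along a non-principal ultrafilter $\mathcal U$, working in the expanded language of Example \ref{E:amenable}(a) so that the normalized counting measure $\mu$ is definable without parameters. Since each $G_d$ is $d$-quasirandom with $d\to\infty$ along $\mathcal U$, the ultraproduct $M=\prod_{\mathcal U}G_d$ is ultra-quasirandom, so by Remark \ref{R:ultra} a sufficiently saturated elementary extension $G$ yields a definably amenable pair $(G,G)$ satisfying Assumptions \ref{H:sigmafte} and \ref{H:Fubini}. Let $A,B$ be the interpretations of the predicates for $(A_d),(B_d)$; by \L o\'s's theorem $\mu(A),\mu(B)\ge\delta>0$, and the set
\[
E=\Bigl\{x\in G \ \Bigm|\ \mu(A\cap xB)\le(1-\eta)\,\mu(A)\mu(B)\Bigr\}
\]
is definable (using definability of $\mu$) with $\mu(E)\ge\epsilon>0$, since the normalized cardinality inequalities transfer through the ultraproduct and the predicates $Q_{r,\varphi}$ encode $\mu$. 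This directly contradicts Corollary \ref{C:density_prod}, which asserts that $\mu(A\cap xB)=\mu(A)\mu(B)$ for $\mu$-almost all $x$, i.e. that $\mu(E)=0$.

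The only genuinely delicate point is the bookkeeping in the transfer step: one must make sure the strict finitary inequality ``$<(1-\eta)$'' is softened to a non-strict ``$\le(1-\eta)$'' before taking the ultralimit, so that the bad set remains definable and of density $\ge\epsilon$ in the limit (an ultralimit of a family of sets each of density $\ge\epsilon$ has density $\ge\epsilon$, but a family of strict inequalities can degenerate). This is exactly the kind of elementary translation the paper promises to carry out once in the proof of Proposition \ref{P:pqr_regularpos}, so I would simply invoke that template rather than redo it. Beyond that caveat, there is no real obstacle: the substance of the statement is Corollary \ref{C:density_prod}, and the finitary version is obtained purely formally.
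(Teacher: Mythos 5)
Your proposal is correct and follows exactly the route the paper intends: the paper gives no separate proof of this corollary, merely invoking the ``standard translation using \L o\'s's theorem'' (with the template spelled out in the proof of Proposition \ref{P:pqr_regularpos}) to deduce it from Corollary \ref{C:density_prod}, which is precisely your contradiction-plus-ultraproduct argument, including the correct care about softening the strict inequality before passing to the ultralimit. The only cosmetic point is that the limit set $E$ is more safely taken to be the internal set $\prod_{\mathcal U}E_d$ (definable by \L o\'s, of measure at least $\epsilon$, and contained in the type-definable locus where $\mu(A\cap xB)\le(1-\eta)\mu(A)\mu(B)$), rather than asserting that the latter locus is itself definable.
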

\begin{proof}
 Assume for a contradiction  that 
	the statement does not hold, so
	there are some fixed positive numbers $\delta$, $\epsilon$ and  $\eta$ such that for each natural number $d$ we 
	find two subsets $A_{d}$  $B_{d}$ of a finite $d$-quasirandom group 
	$G_{d}$, each of density at least $\delta$, such that the cardinality of the subset 
	\[
	\mathcal{X}(G_d)= \left\{x \in G_d \ | \ |A_d\cap xB_d| |G_d| < (1-\eta) |A_d||B_d|\right\}\]  is at least 
	 $ \epsilon|G_{d}|$. 
	
	Following the approach of Example \ref{E:amenable}(a), we consider 
	a suitable
	expansion $\LL$ of the language of groups and regard each group $G_{d}$ 
	as an
	$\LL$-structure $N_{d}$. Choose a non-principal ultrafilter $\mathcal U$ on
	$\N$ and consider the
	ultraproduct $N=\prod_{\mathcal U} N_{d}$. The language $\LL$ is chosen in
	such a way that the sets $A=\prod_{\mathcal U}	A_{d}$ and 
	$B=\prod_{\mathcal U}  B_{d}$  are $\LL$-definable in the ultra-quasirandom group
	$G=\prod_{\mathcal U} G_{d}$. Furthermore, the normalised 
	counting measure on $G_{d}$ induces a definable 
	Keisler measure $\mu$ on $G$, taking the standard
	part of the ultralimit. By Corollary \ref{C:density_prod}, for $\mu$-almost all $g$ in $G$, we have $
	\mu(A\cap gB) = \mu(A)\mu(B)$. Hence, the type-definable set \[ \Sigma =
	\left\{x \in G \ \big| \    \mu(A\cap xB) \le  (1-\eta)\mu(A)\mu(B)    \right\}
	\] does not contain any weakly random type. By compactness, it is contained in a definable set $W$  whose density is $0$, and in particular its density is strictly less than the fixed value $\epsilon$. Since every element in the ultraproduct of the sets $\mathcal X(G_d)$ clearly lies in $\Sigma$, we conclude by  \L o\'s's theorem that $|\mathcal{X}(G_d)| \le |W(G_d)| < \epsilon |G_d|$ for infinitely many $d$'s, which yields the desired contradiction.
	\end{proof}

The following result is a verbatim adaption of \cite[Theorem 5.3]{wG08} and may be seen as a first attempt to solve complete 
amalgamation problems whilst restricting the conditions to 
those given by  products. 

\begin{theorem}\label{T:completemixing}
Fix a natural number $n\geq 2$.  For each non-empty subset $F$ of $\{1,\ldots, 
n\}$, let $A_F$ be a subset of positive density of the generically principal group $G$. The set \[ \mathcal X_n=\left\{(a_1,\ldots, a_n) \in G^n \ 
| \ 
a_F
\in A_F \text{ for all } \emptyset\neq F\subseteq\{1,\ldots, n\} \right\}\]
has measure $\prod_{F} \mu(A_F)$ with respect to the 
measure $\mu_n$ on $G^n$, where $a_F$ 
stands for the product of all $a_i$ with $i$ in $F$ written with 
the 
indices in increasing order. 
\end{theorem}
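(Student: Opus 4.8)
The plan is to prove the statement by induction on $n\geq 2$, using Corollary \ref{C:density_prod} on weak mixing as the engine and a Fubini-type computation to pass from $n$ to $n+1$. The base case $n=2$ is almost immediate: the set $\mathcal X_2$ consists of pairs $(a_1,a_2)$ with $a_1\in A_{\{1\}}$, $a_2\in A_{\{2\}}$ and $a_1a_2\in A_{\{1,2\}}$; rewriting the last condition as $a_2\in a_1\inv A_{\{1,2\}}$ and integrating over $a_1$ gives $\mu_2(\mathcal X_2)=\int_{A_{\{1\}}}\mu(A_{\{2\}}\cap a_1\inv A_{\{1,2\}})\,d\mu$, and since by Corollary \ref{C:density_prod} the integrand equals $\mu(A_{\{2\}})\mu(A_{\{1,2\}})$ for $\mu$-almost all $a_1$ (in particular $\mu$-a.e. on $A_{\{1\}}$), this evaluates to $\mu(A_{\{1\}})\mu(A_{\{2\}})\mu(A_{\{1,2\}})=\prod_{\emptyset\neq F\subseteq\{1,2\}}\mu(A_F)$, as wanted.

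For the inductive step, first I would fix the last coordinate $a_{n+1}$ and slice $\mathcal X_{n+1}$. Splitting the subsets $F\subseteq\{1,\ldots,n+1\}$ into those not containing $n+1$ (which impose conditions $a_F\in A_F$ purely on $(a_1,\ldots,a_n)$) and those containing $n+1$ (which, writing $F=F'\cup\{n+1\}$ with $F'\subseteq\{1,\ldots,n\}$, impose $a_{F'}\cdot a_{n+1}\in A_F$, i.e. $a_{F'}\in A_F\cdot a_{n+1}\inv$), the fiber over $a_{n+1}$ is the set of $(a_1,\ldots,a_n)$ satisfying $a_{F'}\in A_{F'}\cap (A_{F'\cup\{n+1\}}\cdot a_{n+1}\inv)$ for each $\emptyset\neq F'\subseteq\{1,\ldots,n\}$, together with $a_{n+1}\in A_{\{n+1\}}$. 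By the induction hypothesis applied to the $2^n-1$ subsets $B_{F'}:=A_{F'}\cap (A_{F'\cup\{n+1\}}\cdot a_{n+1}\inv)$, the measure of this fiber is $\prod_{F'}\mu(B_{F'})$ — \emph{provided} each $B_{F'}$ has positive density, which by Corollary \ref{C:density_prod} (applied with $A=A_{F'}$, $B=A_{F'\cup\{n+1\}}$, and $g$ ranging over $a_{n+1}\inv$) holds for $\mu$-almost all $a_{n+1}$, and moreover for such $a_{n+1}$ one has $\mu(B_{F'})=\mu(A_{F'})\,\mu(A_{F'\cup\{n+1\}})$. Then integrating over $a_{n+1}\in A_{\{n+1\}}$ via Fubini (Assumption \ref{H:Fubini}, or Remark \ref{R:Fubini_Borel}) yields
\[
\mu_{n+1}(\mathcal X_{n+1})=\int_{A_{\{n+1\}}}\ \prod_{\emptyset\neq F'\subseteq\{1,\ldots,n\}}\mu(A_{F'})\,\mu(A_{F'\cup\{n+1\}})\ d\mu = \mu(A_{\{n+1\}})\prod_{F'}\mu(A_{F'})\mu(A_{F'\cup\{n+1\}}),
\]
and the product on the right, reindexed, runs exactly once over every nonempty $F\subseteq\{1,\ldots,n+1\}$, giving $\prod_F\mu(A_F)$.

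The main obstacle I expect is bookkeeping rather than conceptual depth: one must check that in the inductive step the conditions coming from $F'$ not involving the last coordinate and from $F'\cup\{n+1\}$ are correctly matched with the hypotheses of Corollary \ref{C:density_prod} for \emph{all} $F'$ simultaneously on a single common co-null set of values $a_{n+1}$ (a finite intersection of co-null sets, hence co-null, so this is harmless), and that the measurability and integrability needed to invoke Fubini are in place — the fiber measures are Borel-measurable functions of $a_{n+1}$ by definability of $\mu$, and everything lives inside a fixed $X^{\odot k}=G$ of finite measure, so Remark \ref{R:Fubini_Borel} applies directly. A minor subtlety is that the induction hypothesis is stated for the ultra-quasirandom group $G$ with its \emph{own} normalized counting measure, so one should phrase the induction as a statement uniformly over all choices of positive-density definable subsets, which causes no difficulty. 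One could alternatively avoid the explicit induction by appealing to Corollary \ref{C:pqr_ppal} repeatedly, but the Fubini induction above is the cleanest route.
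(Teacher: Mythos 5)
Your proposal is correct and follows essentially the same route as the paper: induction on $n$, with Corollary \ref{C:density_prod} evaluating the sliced sets and Fubini reassembling the measure; the only difference is that the paper slices on the \emph{first} coordinate, so the translated sets are $a_1^{-1}A_{1,F_1}$ and Corollary \ref{C:density_prod} applies verbatim, whereas your slicing on the last coordinate produces right translates $A_{F'\cup\{n+1\}}\cdot a_{n+1}^{-1}$, for which you need the right-translate analogue of that corollary (immediate from inversion-invariance of the normalized counting measure, but worth a word). Slicing on the first coordinate removes even that small detour.
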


\begin{proof}
We reproduce Gower's proof of \cite[Theorem 5.3]{wG08} and proceed by induction 
on $n$. For $n=2$, set $B=A_{\{2\}}$ and $C=A_{\{1,2\}}$. A pair $(a,b)$ lies in $\mathcal X_2$ if and only if $a$ belongs to  $A_{\{1\}}$ and $b$ to $B\cap a\inv C$. 
Thus \[ \mu_2(\mathcal X_2) = \int_{A_{\{1\}}} \mu(B\cap a\inv C) \ d\mu 
\stackrel{\textrm{Cor. } \ref{C:density_prod}}{=} \mu(B) \mu(C) \mu(A_{\{1\}}),\] as 
desired.  For the general case, for any $a$ in $A_{\{1\}}$, set $B_{F_1}(a)= 
A_{F_1}\cap 
a\inv A_{1,F_1}$, for $\emptyset\neq F_1\subseteq \{2,\ldots, n\}$. Corollary 
\ref{C:density_prod} yields that $\mu(B_{F_1}(a)) =\mu(A_{F_1})\mu(A_{1,F_1})$ 
for 
$\mu$-almost all $a$ in $A_{\{1\}}$. A tuple $(a_1,\ldots, a_n)$ in $G^n$ belongs to 
$\mathcal 
X_n$ if and only if the first coordinate $a_1$ lies in $A_{\{1\}}$ and the tuple $(a_2,\ldots, a_n)$ belongs to 
\[ \mathcal 
X_{n-1}(a_1)=\left\{(x_2,\ldots, x_n) \in G^{n-1} \ | \ 
x_{F_1}  
\in B_{F_1}(a_1) \text{ for all } \emptyset\neq F_1\subseteq\{2,\ldots, n\} 
\right\}.\]   By induction, the set $\mathcal 
X_{n-1}(a)$ has constant $\mu_{n-1}$-measure $\prod_{F_1} 
\mu(A_{F_1})\mu(A_{1,F_1})$, where  
$F_1$ now runs through all non-empty subsets of $\{2,\ldots, n\}$. Thus 
\[ \mu_n(\mathcal X_n) = \int_{A_1} \mu_{n-1}(\mathcal X_{n-1}(a_1)) \ d\mu = 
 \mu(A_1)  \prod_{F_1} \mu(A_{F_1})\mu(A_{1,F_1})= \prod_F \mu(A_F),\]
 which yields the desired result.
\end{proof}
A standard translation using \L o\'s's theorem (we refer to the proof of Corollary \ref{C:ultra_finite} to avoid repetitions)  
yields the following finitary 
version, which was already present in a quantitative form 
in Gowers's work \cite{wG08}.

\begin{cor}\label{C:gowers_mixing}\textup{(}\cf\ 
	\cite[Theorem 5.3]{wG08}\textup{)}
Fix a natural number $n\geq 2$. For every $\emptyset\neq F\subseteq \{1,\ldots, 
n\}$ let $\delta_F>0$ be given. For every $\eta>0$ there is some integer 
$d=d(n, \delta_F, \eta)$ such that for every finite $d$-quasirandom group 
$G$ and subsets $A_F$ of $G$ of density at least $\delta_F$, we have that 
\[\left| \mathcal X_n \right| \ge \frac{1-\eta}{|G|^{2^n-1-n}} \prod_{F} 
|A_F|,\]
where  $\mathcal X_n$ is defined as in Theorem \ref{T:completemixing} with 
respect to the group $G$. 
\end{cor}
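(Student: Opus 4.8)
The plan is to argue by contradiction, using \L o\'s's theorem to collapse a hypothetical sequence of finite counterexamples into a single ultra-quasirandom group, where Theorem \ref{T:completemixing} then applies directly. Concretely, suppose the corollary fails: there are an integer $n\geq 2$, positive reals $(\delta_F)$ indexed by the nonempty subsets $F\subseteq\{1,\ldots,n\}$, and some $\eta>0$ such that for every $d\in\N$ one can choose a finite $d$-quasirandom group $G_d$ together with subsets $A_F^{(d)}\subseteq G_d$ satisfying $|A_F^{(d)}|\geq\delta_F|G_d|$ for all $F$, but with $|\mathcal X_n^{(d)}|<\frac{1-\eta}{|G_d|^{2^n-1-n}}\prod_F|A_F^{(d)}|$, where $\mathcal X_n^{(d)}$ denotes the set $\mathcal X_n$ formed inside $G_d$ from the $A_F^{(d)}$.

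Expand the language of groups as in Example \ref{E:amenable}, adding the counting predicates $Q_{r,\varphi}$ and, in addition, one unary predicate $P_F$ for each nonempty $F\subseteq\{1,\ldots,n\}$; regard each $G_d$ as a structure in this (still countable) language, interpreting $P_F$ as $A_F^{(d)}$. Fix a non-principal ultrafilter $\mathcal U$ on $\N$, let $M=\prod_{\mathcal U}G_d$, and pass to a sufficiently saturated elementary extension $\UU$ in which $G$ denotes the interpretation of the group and $A_F$ that of $P_F$. For every fixed $d_0\geq 1$ the set $\{d\in\N : G_d\text{ is }d_0\text{-quasirandom}\}$ contains all $d\geq d_0$ and hence lies in $\mathcal U$, so $M$ is ultra-quasirandom; by Remark \ref{R:ultra} the pair $(G,G)$ is then definably amenable for the normalized counting measure $\mu$ and satisfies Assumptions \ref{H:sigmafte} and \ref{H:Fubini}. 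The set $\mathcal X_n$ is definable in $\UU$ by the single $\LL$-formula obtained from the defining condition of $\mathcal X_n$ upon replacing each clause ``$a_F\in A_F$'' by ``$P_F(a_F)$'', and this formula, together with the normalized counting measure of the set it defines, is recorded by the predicates $Q_{r,\varphi}$. Hence \L o\'s's theorem gives $\mu(A_F)=\lim_{d\to\mathcal U}|A_F^{(d)}|/|G_d|\geq\delta_F$ for each $F$, and $\mu_n(\mathcal X_n)=\lim_{d\to\mathcal U}|\mathcal X_n^{(d)}|/|G_d|^n$. Dividing the counterexample inequality by $|G_d|^n$ and passing to the ultralimit yields $\mu_n(\mathcal X_n)\leq(1-\eta)\prod_F\mu(A_F)$.

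On the other hand, Theorem \ref{T:completemixing} applies directly to the ultra-quasirandom pair $(G,G)$ and gives the exact value $\mu_n(\mathcal X_n)=\prod_F\mu(A_F)$. Since $\prod_F\mu(A_F)\geq\prod_F\delta_F>0$, the bound of the previous paragraph forces $\prod_F\mu(A_F)\leq(1-\eta)\prod_F\mu(A_F)<\prod_F\mu(A_F)$, which is absurd; this contradiction proves the corollary. The one point requiring care --- precisely the translation the paper defers to the proof of Proposition \ref{P:pqr_regularpos} --- is the bookkeeping of the second paragraph: one must check that the ultraproduct construction of Example \ref{E:amenable} genuinely recovers $\mu(A_F)$ and $\mu_n(\mathcal X_n)$ as ultralimits of the corresponding finite densities, i.e.\ that the relevant first-order formulae define $A_F^{(d)}$ and $\mathcal X_n^{(d)}$ uniformly in $d$ and that the predicates $Q_{r,\varphi}$ faithfully encode their cardinalities. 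Everything else is a routine overspill argument, and the identical scheme yields the finitary form of Corollary \ref{C:density_prod} stated just before Theorem \ref{T:completemixing}.
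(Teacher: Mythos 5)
Your proof is correct and is precisely the ``standard translation using \L o\'s's theorem'' that the paper invokes (deferring the details to the proof of Proposition \ref{P:pqr_regularpos}): negate the statement, form the ultraproduct of a sequence of counterexamples, observe it is ultra-quasirandom, and contradict Theorem \ref{T:completemixing}. No discrepancies with the intended argument.
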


The above corollary yields in particular that  
\[ \left| \left\{(a, b, c) 
\in A\times B \times C \ | \ ab=c \right\}\right|> \frac{1-\eta}{|G|} |A||B||C| 
\] as first proved by Gowers \cite[Theorem 
3.3]{wG08}, which implies that the number of such triples is a proportion 
(uniformly on the densities and $\eta$) of $|G|^2$. 

To conclude this section we answer affirmatively the question in the introduction for generically principal groups, whenever all the types are based over a common countable elementary substructure.

\begin{theorem}\label{T:GenPrinMainQuestion}
Fix a natural number $n\geq 2$ and a countable elementary 
substructure $M$ of the generically principal definably amenable 
pair $(G, X)$.  For each non-empty subset $F$ of $\{1,\ldots, 
n\}$, let $p_F$ be a weakly random type over $M$. There exists a weakly random $n$-tuple
$(a_1,\ldots, a_n)$ in $G^n$ such that $a_F$ realises $p_F$ for all $\emptyset\neq F\subseteq\{1,\ldots, n\}$, where $a_F$ 
stands for the product of all $a_i$ with $i$ in $F$ written with 
the 
indices in increasing order.  
\end{theorem}
\begin{proof}
Since $M$ is countable, enumerate all the formulae occurring in each type $p_F$ in a decreasing way, that is, write $p_F=\{A_{F,k}\}_{k\in \N}$ with $A_{F,k+1} \subseteq A_{F,k}$ for every natural number $k$. 
We want to show that the set 
\[ 
\mathcal X_{n}=\left\{(x_1,\ldots, x_n) \in G^n \ 
| \ p_F(x_F)  \text{ for all } \emptyset\neq F\subseteq\{1,\ldots, n\} \right\}
\]
is weakly random over $M$, that is, we need to prove that the partial type 
\[ \{ \neg\psi(x_1,\ldots, x_n)\}_{\psi \in \Sigma}  \cup \{ x_F \in A_{F,k}\}_{\substack{F\in \mathcal P \\ k\in \N  }} \] is consistent,
where $\mathcal P =\mathcal P(\{1,\ldots,n\})\setminus\{\emptyset\}$ and $\Sigma$ is the set of $\LL_{M}$-formulae of $\mu_n$-measure $0$. 
By compactness, since the subsets $A_{F,k}$ are enumerated decreasingly, we need only consider a finite subset of the above partial type where the level $k_0$ is the same for each of the subsets $A_{F,k_0}$ of positive density. By  Theorem \ref{T:completemixing} the set 
\[ \mathcal X_{n,k_0}=\left\{(a_1,\ldots, a_n) \in G^n \ 
| \ 
a_F
\in A_{F,k_0} \text{ for all } \emptyset\neq F\subseteq\{1,\ldots, n\} \right\}\]
has $\mu_n$-measure $\prod_{F} \mu(A_{F, k_0})>0$, so we conclude the desired result. 
\end{proof}

\section{Local ultra-quasirandomness}\label{S:localultra}
In this final section, we will adapt some of the ideas present in Section \ref{S:ultra} to arbitrary finite groups. 

Theorem \ref{T:pqr} holds  in any definably 
amenable pair for  \emph{almost all} three weakly random types, whenever their cosets modulo $\GO M$ are product-compatible. Thus, it yields
asymptotic information for subsets of positive 
density in arbitrary finite groups satisfying certain regularity 
conditions, which force that in the ultraproduct some 
completions are in a suitable position to apply our main Theorem 
\ref{T:pqr}. We will present two examples of such regularity
notions. Our intuition behind these notions is purely 
model-theoretic and we ignore whether it is meaningful from a combinatorial perspective. We would like to express our gratitude to Julia Wolf (and indirectly to Tom Sanders) for pointing out that our previous definition of principal subsets did not extend to the abelian case.  

\begin{definition}\label{D:ppal}
Let $A$ be a definable subset of $\langle X\rangle$ of positive density in a definably amenable pair $(G,X)$. We say that $A$ is \emph{principal} over the parameter set $B$ if 
		\[ \mu(A \cap (Y\cdot Y) ) >0\] whenever $Y$ is a $B$-definable neighborhood of the identity (that is, the set $Y$ is symmetric and contains the identity) such that finitely many left translates of $Y$ cover $A\cdot A\inv\cdot A\cdot A\inv$. 
		
		Analogously, we say that $A$ is \emph{hereditarily principal over the parameter set $B$} if all of its $B$-definable subsets of positive density are principal.
\end{definition}

\begin{remark}\label{R:ultra_ppal}
Let $A$ be a definable subset of $\sbgp X$ of positive density of a  definably amenable pair $(G,X)$ such that  $\mu(A \cap (Y\cdot Y) ) = \mu(A)$, whenever $Y$ is a definable neighborhood of the identity which covers $A\cdot A\inv\cdot A\cdot A\inv$ with finitely many left translates. Then the set $A$ is hereditarily principal over any subset of parameters. 
\end{remark}
\begin{proof} Let $A_0$ be a definable subset of $A$ of positive measure. Notice that there is a maximal finite subset $F$ of $(A A\inv)^2$ with the property that $\mu(xA_0\cap yA_0)=0$ for any two distinct $x$ and $y$ in $F$.  In particular, the set $(A A\inv)^2$ is contained in $ F\cdot A_0\cdot A_0\inv$. Thus, any  
definable neighborhood $Y$ of the identity such that finitely many left translates of cover $A_0A_0\inv A_0 A_0\inv$ also cover $AA\inv A A\inv$, so $\mu(A \cap (Y\cdot Y) ) = \mu(A)$ by assumption on $A$. Hence $\mu(A_0\cap (Y Y))=\mu(A_0)>0$, as desired. 
\end{proof}
\begin{example}
If $G$ is generically principal, every definable subset $A$ of positive density is hereditarily principal over any  parameter set: Indeed, Lemma  \ref{L:genPPla_equiv} yields that $G=A\cdot A\inv\cdot A\cdot A\inv$. Therefore, finitely many translates of the neighborhood $Y$ must cover $G$, so $Y$ has positive measure and hence $\mu(Y\cdot Y)=1$ by Lemma \ref{L:genPPla_equiv}. 

By the previous remark, the definable subset $A$ satisfies that $\mu(A \cap (Y\cdot Y) ) = \mu(A)$, so $A$ is hereditarily principal over any subset of parameters. 
\end{example}

\begin{example}
Fix some enumeration $(q_n)_{n\in \N}$ of all the primes and consider the family of groups  $(G_n=  \mathrm{PSL}_2(q_n)\times \Z_2)_{n\in \N}$, each equipped with  the  distinguished subset $X_n=\mathrm{PSL}_2(q_n)\times\{\bar 0\}$. This family produces a definably amenable pair $(G,X)$, as  in the Example \ref{E:amenable}.  Note that \[ G=\mathrm{PSL}_2(\mathbb F)\times \Z_2 \text{ and } X=\mathrm{PSL}_2(\mathbb F)\times \{\bar 0\}\] for some infinite (pseudofinite) field $\mathbb F$. Over any elementary substructure $M$ we have that $\GO M$ equals the simple group $X=\mathrm{PSL}_2(\mathbb F) \times \{\bar 0\}$, which is clearly definable. The definable subset $G$ is clearly principal yet not hereditarily principal, for the dense subset  $X\cdot (0_{\mathrm{PSL}_2(\mathbb F)}, \bar 1)$ does not intersect $X=\GO M$.
\end{example}

\begin{lemma}\label{L:ppal_type_ppal}
Let $M$ be a countable elementary substructure of a definably amenable pair $(G, X)$.
\begin{enumerate}[(a)]
    \item Principal definable sets over $M$ contain weakly random 
    principal types in $S_\mu(M)$, that is, types concentrated in 
    $\Conn M$.
    \item Every weakly random type over $M$ containing a 
    hereditarily principal definable set is principal. 
\end{enumerate}
\end{lemma}
\begin{proof}
For (a), assume that the $M$-definable set $A$ is principal over the model $M$. Note that we can write the type-definable subgroup $\Conn{M}$ as a countable intersection 
	\[
	\Conn{M} = \bigcap_{i\in \N} V_i,
	\]
	where the decreasing chain $(V_i)_{i\in \N}$ consists of 
	$M$-definable neighborhoods of the identity such that 
	$V_{i+1}\cdot V_{i+1}\subseteq V_i$ for all $i$ in $\N$. Since 
	$\Conn{M}$ has bounded index in the subgroup $\sbgp X$,  compactness yields that 	finitely many translates of each $V_i$ cover the subset $A\cdot 
	A\inv \cdot A \cdot A\inv$ (yet the number of 
	translates possibly depends on $i$). Hence, the type-definable 
	subset $A \cap \Conn M$ is weakly random, since $A$ is principal, so 
	$A$ contains a weakly random type concentrated in $\Conn M$, as desired.

 For (b), suppose that the $M$-definable set $A$ is hereditarily 
 principal yet it contains a weakly random type $q$ which does not 
 concentrate on $\Conn M=\bigcap_{i\in \N} V_i$, with the same notation as above. By 
 compactness, this implies the existence of some $i$ in $\N$ and 
 some $M$-definable subset $A_0$ of $A$ of positive density with 
 $A_0\cap V_i=\emptyset$. The subset $A_0\cap (V_{i+1}\cdot 
 V_{i+1})$ has in particular measure $0$, so $A_0$ is not principal, 
 contradicting our assumption on $A$. 
\end{proof}

\begin{prop}\label{P:ppal}
Consider a  subset $A$ of positive density definable over a countable elementary substructure $M$ of a sufficiently 
saturated definably amenable pair $(G,X)$. If $A$ contains a weakly 
random type $p$ concentrated in $\Conn M$, then the subset 
\[  \left\{(a, b) 
\in A\times A \ | \ a\cdot b \in A \right\} \] has positive $\mu_2$-measure. 

In particular, if $A$ is principal, then the above set of pairs  has positive 
$\mu_2$-measure.
\end{prop}
\noindent Notice that the definable set $A$ above cannot be 
product-free, for the equation $x\cdot y =z$ has a solution in $A$. 
\begin{proof}
The proof is an immediate  application of Fact \ref{F:Hr}: Indeed, for 
every realization $a$ of $p$, the partial type $p\cap a\inv\cdot p$ is 
weakly random (for the weakly random element $a$ over $M$ belongs to 
$\Conn M$), so choose a weakly random element $b$ over $M, a$ 
realizing $p$ such that $a\cdot b$ does it as well. By Lemma \ref{L:trans}, we obtain a weakly 
random type $\tp(a, b/M)$ with all three elements $a, b$ and 
$a\cdot b$ in $A$, which yields immediately the desired result. 
\end{proof}

Proposition \ref{P:ppal} resonates with work of Schur \cite[Hilfssatz]{iS16} on the number of monochromatic triples  $(x, y, x\cdot y)$ in any finite coloring (or cover) of the natural numbers $1,\ldots, N$, for $N$ sufficiently 
large.  In fact, by a standard  application of \L 
 o\'s's theorem, the above argument yields a non-quantitative version of the following result of Sanders \cite[Theorem 
 1.1]{tS19}: 
\medskip

\noindent \textit{For every natural number $k\ge 1$ there is some 
$\eta=\eta(k)>0$ with the following 
property: Given any coloring on a finite group $G$ with $k$ many colors 
$A_1,\ldots, A_k$, there exists some color $A_j$, with $1\le j\le 
k$, such that}
\[
\left| \left\{(a, b, c) 
\in A_j\times A_j \times A_j \ | \ a\cdot b =c \right\} \right| \ge 
\eta|G|^2.
\]
Motivated by Gowers's result  \cite[Theorem 5.3]{wG08} for (ultra-)quasi\-random groups, we will now provide a weaker version of it, taking all $A_F$'s to be the same subset $A$, for $\emptyset \neq F\subseteq \{1,\ldots, n\}$ as in Corollary \ref{C:gowers_mixing}.

\begin{cor}\label{C:ppal}
	In a sufficiently saturated definably amenable pair $(G,X)$ with associated measure $\mu$, consider a definable subset $A$ of $X$ of positive density which is hereditarily principal over the parameter set $G$ itself. For every countable elementary substructure $M$ of $(G, X)$ such that both the measure and the sets $A$ are  $M$-definable,  there is a tuple $(a_1,\ldots, a_n)$ in $G^n$  weakly random over
		$M$ 
		such that the product $a_F$ (as in Theorem \ref{T:GenPrinMainQuestion}) lies in $A$ for every subset $F$ as 
		above. 
\end{cor}
An inspection of the proof shows that it suffices if the definable set 
$A$ is hereditarily principal over $N$, where $N$ is an 
$\aleph_1$-saturated elementary substructure of $(G, X)$ 
containing $M$. This is not surprising, since an easy compacteness 
argument shows that a set $A$ which is hereditarily principal over 
an $\aleph_1$-saturated elementary substructure $N$ of $(G, X)$ 
must be hereditarily principal over the parameter set $G$ itself. 
\begin{proof}
	We proceed by induction on the natural number $n$. Since both the base case $n=3$ and the induction step have similar proofs, we will assume that the statement of the Corollary has already been shown for $n-1$.

 The set $A$ is principal, so it contains a weakly random type concentrated in $\Conn M$, by Lemma \ref{L:ppal_type_ppal} (a). As in the proof of Proposition \ref{P:ppal}, there is a weakly random 
 element $a_1$ in $A$ over $M$ such that $A'=A\cap a_1\inv\cdot A$ has 
 positive density. Notice that $A'$ is no longer definable over $M$, 
 yet it is again hereditarily principal over the parameter set $G$. By 
 Downwards Löwenheim-Skolem, choose some countable elementary 
 substructure $M_1$ of $(G,X)$ containing $M\cup\{a_1\}$. By 
 induction, there is a tuple $(a_2,\ldots, a_n)$, 
	weakly 
	random over $M_1$, such that each product $a_{F_1}$ 
	lies in $A'$ for every subset $\emptyset \neq 
	F_1\subseteq\{2,\ldots, n\}$.  For $n=3$,  we obtain such a tuple by applying Proposition \ref{P:ppal} to the principal $M_1$-definable set $A'$.
	
	Lemma \ref{L:trans} yields now that the 
	tuple $(a_1,\ldots, a_n)$ is weakly random over $M$. By construction, 
	the 
	product $a_{F}$ 
	lies in $A$ for every subset $\emptyset \neq 
	F\subseteq\{1,\ldots, n\}$, as desired. 
\end{proof}

Motivated by the above result, we isolate  a  
particular instance of  a complete amalgamation problem (\cf the 
question in the  introduction).  
\begin{question}
Let $M$ be a countable elementary substructure of  a sufficiently 
saturated definably amenable pair $(G,X)$ and $p$ be a weakly 
random  type in $\Conn {M}$. Given a natural number $n$, is 
there a tuple $(a_1,\ldots, a_n)$ in $G^n$ weakly random over $M$ such that 
$a_F$ realizes $p$ for all  $\emptyset\neq F\subseteq\{1,\ldots, 
n\}$, where $a_F$ stands for the product, enumerated in an 
increasing order, of all $a_i$ with $i$ in $F$?
\end{question} 
At the moment of writing, we do not have a solid guess what the 
answer to the  above question will be. Following the lines of the proof of Corollary \ref{C:ppal}, the above question 
would have a positive answer if the following statement is true: 

\medskip
\emph{Let $p=\tp(a/M_0)$ be a weakly random type in $\Conn{M_0}$, where $M_0$ is a countable elementary substructure of a saturated definably amenable pair $(G,X)$. Then there are an elementary substructure $M_1$ containing $M_0\cup\{a\}$ and a weakly random type $q$ in $\Conn{M_1}$ extending $p\cap a\inv \cdot p$}
\medskip

Nonetheless, if  the question 
could be positively answered, it 
would imply by a standard compactness argument a finitary version of  Hindman's Theorem 
\cite{nH74}.  
\begin{remark}\label{R:IP}
	If the above question has a positive answer, then for every natural 
	numbers $k$ and $n$ there is some constant
	$\eta=\eta(k,n)>0$ such that in any coloring on a finite group 
	$G$ with $k$ many colors 
	$A_1,\ldots, A_k$, there exists some color $A_j$, with $1\le j\le 
	k$ such that
	\[\left| \left\{(a_1,\ldots, a_n) \in G^n \ | \ 
	a_F
	\in A_j \text{ for all } \emptyset\neq F\subseteq\{1,\ldots, n\} \right\} 
	\right| \ge \eta|G|^{n},\]
	where $a_F$ stands for the product, enumerated in an 
	increasing order, of all $a_i$ with $i$ in $F$.
\end{remark}

We can now state the finitary versions of principal sets to provide finitary analogs of Proposition \ref{P:ppal}
 and Corollary \ref{C:ppal}.
 \begin{definition}\label{D:ppal_fte}
Fix $\epsilon>0$ and $k$ in $\N$.  	A finite subset $A$ of a group $G$ is \emph{$(k,\epsilon)$-principal} if 
		\[ |A \cap (Y\cdot Y)| \geq \epsilon |A|\] whenever $Y$ is a neighborhood of the identity (that is, the set $Y$ is symmetric and contains the identity) such that $k$ many left translates (or equivalently, right translates) of $Y$ cover $A\cdot A\inv\cdot A\cdot A\inv$. 

\noindent We shall say that the finite subset $A$ is \emph{hereditarily $(k,\epsilon)$-principal up to $\rho$} if all its subset of relative density  at least $\rho$ (in $A$) are $(k,\epsilon)$-principal.
\end{definition}

\begin{example}
Consider the finite group $G=\Z_n\times \Z_2$. The set $G$ is clearly $(k,1/k)$-principal for every natural number $k\ne 0$, yet it is not hereditarily $(2,1/k)$-principal up to $1/2$ for any $k\ne 0$, for the subset  $A=\Z_n\times \{\bar 1\}$ does not intersect $Y=\Z_n\times \{\bar 0\}$, which covers $G$ in $2$ steps.
\end{example}

\begin{example}
Given a subset $A$ of a finite group $G$ of density at least $\epsilon$, the symmetric set $AA\inv$ is $(k,\epsilon/k)$-principal. Indeed, if $Y$ is a given neighborhood of the identity such that $k$ many right translates of $Y$ cover $(AA\inv)^4$, then there exists some $c$ in $G$ such that $|Ac\cap Y|\ge |A|/k$ and so $|AA\inv \cap YY|\ge \epsilon|AA\inv|/k$, since $(Ac\cap Y)(Ac\cap Y)\inv \subseteq AA\inv \cap YY$. 
\end{example}
\begin{cor}\label{C:ppaln=2}
    Let $K>0$ and $\delta>0$ be given real numbers. There are real values $\epsilon=\epsilon(K, \delta)>0$ and
	$\eta=\eta(K, \delta)>0$ as well as a natural number $k=k(K, \delta)$ such that for every group
	$G$  and a finite subset $X$ of $G$ of tripling at most $K$ together with a $(k,\epsilon)$-principal subset $A$ of $X$ of relative density at 
	least $\delta$ with respect to $X$, the collection of triples  
	\[ \left\{(a,b) \in A\times A \ | \ 
	a\cdot b \in A \right\} \] 
	has size at least  $\eta|X|^{2}$.
\end{cor}
\begin{proof}
Assume for a contradiction that 
	the statement does not hold. Negating quantifiers there are positive constants $K$ and $\delta$ such that for each triple $\bar\ell =(k, n,m)$ of natural numbers there exists a group $G_{\bar\ell}$ and a finite subset $X_{\bar\ell}$ of $G_{\bar\ell}$ of tripling at most $K$ as well as a $(k,1/n)$-principal subset $A_{\bar\ell}$ of $X_{\bar\ell}$ of relative density at least $\delta$  such that the cardinality of the subset 
	\[
	\mathcal{Y}(G_{\bar\ell})= \left\{(x,y) \in A_{\bar\ell}\times A_{\bar\ell}  \ | \	x\cdot y \in  A_{\bar\ell} \right\}\]  is bounded above by 
	 $|X_{\bar\ell}|^2/m$. 
	
	Following the approach of the Example \ref{E:amenable}\,(b), we consider 
	a suitable countable expansion $\LL$ of the language of groups and regard each such group $G_{\bar\ell}$, with $\bar\ell$ of the form $(k,k,k)$, 
	as an
	$\LL$-structure $N_{\bar\ell}$ in such a way that $\LL$ contains predicates for $X_{\bar\ell}$ and $A_{\bar\ell}$. Identify now the set of such triples $(k,k,k)$ with the natural numbers in a natural way and choose a non-principal ultrafilter $\mathcal U$ on
	$\N$. Consider the
	ultraproduct $N=\prod_{\mathcal U} N_{\bar\ell}$. As outlined in the Example \ref{E:amenable}, this construction gives rise to a definable amenable pair $(G,X)$ with respect to a measure $\mu$ equipped with an $\emptyset$-definable subset $A$ of $X$ of positive density (at least $\delta$) such that $\mu_2(\mathcal Y(G)) =0$.  Notice that $A$ is now principal over the parameter set $N$, by \L o\'s's theorem. 
	
	Fix a countable elementary substructure $M$ of $N$. 
  By Proposition \ref{P:ppal},  the set
	\[
	\mathcal Y(G) = \left\{ (x,y)\in A\times A \, | \,  x\cdot y\in A \right\}
	\]
	has positive density with respect to $\mu_2$, which contradicts the ultraproduct construction.
\end{proof}
The proof of the next result follows from Corollary \ref{C:ppal} along the same lines as Corollary \ref{C:ppaln=2} by a standard
	ultraproduct argument using \L o\'s's theorem (and implicitly that a non-principal ultraproduct of finite sets is $\aleph_1$-saturated). 
\begin{cor}\label{C:ppaln_ge3}
 For a natural number $n\geq 3$, let  real numbers $K>0$ and $\delta_F>0$,  for $\emptyset\neq F\subseteq \{1,\ldots, 
	n\}$ be given. There are $\epsilon=\epsilon(n, K, \delta_F)>0$,  $\rho=\rho(n, K, \delta_F)$ and 
	$\eta=\eta(n,K, \delta_F)>0$ as well as a natural number $k=k(n, K, \delta_F)$ such that for every group 
	$G$  and a finite subset $X$ of $G$ of tripling at most $K$ together with a subset $A$ of $X$ of relative density at 
	least $\delta$, whenever 
	\[\left| \left\{(a_1,\ldots, a_n) \in G^n \ | \ 
	a_F
	\in A \text{ for all } \emptyset\neq F\subseteq\{1,\ldots, n\} \right\} 
	\right| < \eta|X|^{n},\]
	where $a_F$ stands for the product, enumerated in an 
	increasing order, of all $a_i$'s with $i$ in $F$, then $A$ cannot be hereditarily  $(k,\epsilon)$-principal up to $\rho$. 
\end{cor}
In order to extend Proposition \ref{P:ppal} to pairs $(a, b)$ in  the cartesian product $A\times B$ with $a\cdot b$ in $C$, we will introduce a new notion, which we will refer to as compatibility for certain subsets in a definably amenable pair.

\begin{definition}\label{D:compatible_infte}
Let $A$, $B$ and $C$ be subsets of $\langle X\rangle$ of positive density in a definably amenable pair $(G,X)$, all three definable over the countable elementary substructure $M$. We say that $A$ and $B$  are \emph{compatible} with respect to $C$ over $M$ if there exists a random pair $(a, b)$ in $A\times B$ over $M$ such that $a\cdot b$ lies in the same coset modulo  $\Conn M$ as some element $c$ of $C$ which is weakly random over $M$.
\end{definition}		

It is clear that every two definable subsets $A$ and $B$ of positive density in a generically principal group $G$ are compatible with respect to any subset $C$ of positive density over any countable elementary substructure $M$ containing the parameters of definition of all three sets. More generally, we have the following observation. 

\begin{remark}\label{R:subsets_comp} Given three definable subsets $A, B$ and $C$ of positive density at least $\delta >0$ in a definably amenable pair $(G,X)$, all three defined over a countable elementary substructure $M$, every weakly random type of $\Conn M$ is contained in 
\[ 
A\cdot A\inv \cap B\cdot B\inv \cap  C\cdot C\inv,
\]  by Fact \ref{F:Hr}.  Hence, the $M$-definable set 
\[
\left\{(x, y) \in (A\cdot A\inv) \times (B\cdot B\inv) \ | \ x\cdot y \in C\cdot C\inv \right\} 
\] contains a pair $(a_1, b_1)$ with $a_1$ and $b_1$ both in $\Conn M$ weakly random over $M$ such that $b_1$ is weakly random over $M,a_1$. Hence, the above set has positive density,  so there exists a random pair $(a,b)$ in $AA\inv\times BB\inv$ over $M$ such that $a\cdot b$ belongs to $CC\inv$. Since $a\cdot b$ is (weakly) random over $M$, we deduce that $A\cdot A\inv$ and $B\cdot B\inv$ are compatible with respect to $C\cdot C\inv$.
\end{remark}

\begin{lemma}\label{L:ultra_ppal}
Let $A$, $B$ and $C$ be subsets of $\langle X\rangle$ of positive density in a definably amenable pair $(G,X)$, all three definable over the countable elementary substructure $M$. 
\begin{enumerate}[(a)]
    \item If for some element $g$ in $\Conn M$ weakly random over $M$,  the definable subset 
    \[
    Z_g=\{(a, b) \in  A\times B  \ | \ a\cdot b \in C\cdot g \}    \] has positive $\mu_2$-measure, 
     then $A$ and $B$ are compatible with respect to $C$ over $M$. 
    \item If $A$ and $B$ are  compatible with respect to $C$ over $M$, then the $M$-definable set 
\[
\{(a, b) \in A\times B \ | \ a\cdot b \in C \} 
\] 
has positive $\mu_2$-measure. 
\end{enumerate}
\end{lemma}
\begin{proof} For (a), given a weakly random element $g$ in $\Conn M$ suppose that the definable set $Z_g$ has positive density. By Remark \ref{R:posdens_random}, choose some $(a, b)$ in $Z_g$ random over $M, g$, so the element $c=a\cdot b\cdot g$ is again random over $M$ by Remark \ref{R:random_wide} and Lemma \ref{L:sym_random}. This  immediately yields that $A$ and $B$ are compatible with respect to $C$ over $M$.

For (b), suppose that $A$ and $B$ are compatible with respect to $C$ over $M$, so by definition, there is a random pair $(a, b)$ in  $A\times B$ over $M$ such that $a\cdot b$ lies in the same coset of $\Conn M$ as some element  $c$ in $C$ whose type over $M$ is weakly random.
By Lemma \ref{L:sym_random}, the pair $(a\inv, a\cdot b)$ is a random pair over $M$, so the definable set $A\inv \cdot (a\cdot b)\cap B$ has positive measure, for it belongs to the weakly random type $\tp(b/M, a\cdot b)$. By Corollary \ref{C:measure_constant_coset}, we deduce that $A\inv \cdot c \cap B$  has positive measure, so choose $b_1$ in $B$ weakly  random over $M, c$ such that $c=a_1\cdot b_1$. In particular, the $M$-definable set 
\[
\{(y, z) \in B \times C \ | \ z\cdot y\inv \in A \} 
\] has positive $\mu_2$-measure and so it contains a random pair $(b_2,c_2)$ over $M$. The pair $(a_2, b_2)$ of $A\times B$, with $a_2=c_2\cdot b_2\inv$ is again random over $M$ by Lemma \ref{L:sym_random} and satisfies that $a_2\cdot b_2$ belongs to $C$, as desired. 
\end{proof}
\begin{remark}
If the definable set $A$ has positive density and the pair $(A,A)$ is 
compatibly with respect to $A$ over a countable elementary 
substructure $M$, then $A$ is not product-free (cf. the 
corresponding comment after Proposition \ref{P:ppal}). On the other 
hand, is it the case that every principal definable set yields a 
compatible pair? Or are the two notions unrelated, even if they 
provide the same positive answer?
\end{remark}

Lemma \ref{L:ultra_ppal} yields a sufficient condition to ensure that the corresponding ultraproducts of finite subsets will be compatible. We have several candidates of finitary versions of compatibility, which will allow us to obtain a local version of \cite[Theorem 5.3]{wG08} to count the 
number of pairs in $A\times B$ such that the product $a\cdot b$ lies in the subset $C$ of positive density, all within a finite subset of small tripling. However,  it is unclear to us how combinatorially relevant our tentative definitions are, so we would rather leave the ultraproduct formulation as an open question: Is there a meaningful combinatorial definition (akin to Definition \ref{D:ppal_fte}) of when two finite sets $A$ and $B$ are compatible with respect to the finite set $C$?

\end{document}